\newtheorem{conjecture}[thm]{Conjecture}
\newcommand{\R}{\mathbb{R}}
\newcommand{\rank}{R}
\newcommand{\ignore}[1]{}
\newcommand{\clos}{\operatorname{cl}}
\newcommand{\inter}{\operatorname{int}}
\begin{document}

\title{Real subrank of order-three tensors}
\author{Benjamin Biaggi}
\address{Mathematical Institute, University of Bern, Alpeneggstrasse 22, 3012 Bern, Switzerland}
\email{benjamin.biaggi@unibe.ch}

\author{Jan Draisma}
\address{Mathematical Institute, University of Bern, Sidlerstrasse 5, 3012 Bern, Switzerland}
\email{jan.draisma@unibe.ch}

\author{Sarah Eggleston}
\address{Institute of Mathematics, Osnabrück University, Albrechtstra{\ss}e 28a, 49076 Osnabr\"uck, Germany}
\email{sarah.eggleston@uni-osnabrueck.de}

\thanks{BB and JD were supported by Swiss National Science foundation
project grant 200021-227864. SE was partially funded by the Deutsche Forschungsgemeinschaft (DFG) – Project 445466444.}


\begin{abstract}
We study the subrank of real order-three tensors and give an upper
bound to the subrank of a real tensor given its complex subrank. Using
similar arguments to those used by Bernardi-Blekherman-Ottaviani, we
show that all subranks between the minimal typical subrank and the
maximal typical subrank, which equals the generic subrank, are also
typical. We then study small tensor formats with more than one typical
subrank. In particular, we construct a $3 \times 3 \times 5$-tensor
with subrank $2$ and show that the subrank of the $4 \times 4 \times
4$-quaternion
multiplication tensor is $2$. Finally, we consider the tensor associated
to componentwise complex multiplication in $\CC^n$ and show that this
tensor has real subrank $n$---informally, {\em no more than $n$ real scalar
multiplications can be carried out using a device that does $n$
complex scalar multiplications}. We also prove a version of this
result for
other real division algebras. 
\end{abstract}

\maketitle

\section{Introduction}
The rank of a matrix 
can be defined in many different but equivalent ways, such as the minimum number of rank-one matrices required to sum to the given matrix, or the number of ones on the diagonal achieved through Gaussian elimination. For tensors of larger order, these definitions no longer coincide, and many different generalizations of matrix rank have been studied over the last few decades. 
The subrank, first introduced by Strassen \cite{Strassen1987}, is
defined as the length of the largest diagonal tensor that can be
obtained from the tensor by linear slice operations in all directions.
This is dual to the rank of a tensor, which is defined as the minimal
number of rank-one tensors that sum to the tensor. 

Given a $k$-vector space $V$, an order-three tensor $T \in V^\ast
\otimes V^\ast  \otimes V$ defines a bilinear map $V \times V \to V$
also denoted $T$. The subrank is the largest $r$ such that there exist linear maps $\varphi_1 : k^r \to V, \varphi_2: k^r \to V$ and $\varphi _3:V \to k^r$ with $\varphi _3 (T(\varphi_1 (a), \varphi_2 (b) )) = I_r(a,b)$ for all $a,b \in k^r$, where $I_r$ is the bilinear map given by $r$ independent scalar multiplications:
\[
I_r : k^r \times k^r \to k^r; ((a_1 , \ldots , a_r),(b_1 , \ldots , b_r) ) \mapsto (a_1 b_1, \ldots , a_r b_r).
\]
The rank of $T$ is dually the smallest $l$ such that there exist
linear maps $\psi_1 :  V \to k^l$, $\psi_2 :  V \to k^l$ and $\psi_3 :
k^l \to V$ with $ T(v,w) = \psi _3 (I_l(\psi_1 (v), \psi_2 (w) )) $
for all $v,w \in V$. Thus the rank corresponds to the minimal number
of linearly independent scalar multiplications required to evaluate $T$, or the ``cost'' of the tensor, and the subrank is the maximal number of linearly independent scalar multiplications that can be embedded into $T$, the ``value'' of the tensor.

In this paper, we focus on the case where $k=\RR$. Hence, 
we discuss different aspects of the following natural question:
\begin{quote}
{\em 
    Given a real bilinear map, how many linearly independent $\RR$ scalar multiplication can be embedded into the bilinear map? 
    }
\end{quote}
Recent work by Derksen-Makam-Zuiddam \cite{DMZ24} and
Pielasa-{\v{S}}afránek-Shatsila \cite{PSS-exactvaluesgenericsubrank}
solved this question for sufficiently general tensors over
algebraically closed fields; in particular, they showed that the
{\em generic} subrank of $T \in k^{n} \otimes k^{n} \otimes k^{n}$ is
$\lfloor\sqrt{3n -2 } \rfloor$. However, over $\RR$, there can be
multiple {\em typical} subranks for certain tensor formats. Similarly,
while over an algebraically closed field there is a single generic
rank for a given tensor format, over $\RR$ there can be multiple
typical ranks. In addition to several general results, we will
investigate many small formats of order-three tensors. We start by giving precise definitions for the terms introduced above.

\begin{definition}
    Let $T\in V_1 \otimes \dots \otimes V_d $, where $V_i$ are vector spaces over the field $k$. Then the subrank of $T$ is defined as
    \[
    Q(T) := \max \{r \mid \exists \text{ linear maps } \varphi_i: V_i  \to k ^{r}  ; (\varphi_1 \otimes \dots \otimes \varphi_d)T = \sum_{j=1}^r e_j \otimes \dots \otimes e_j =: I_r \} .
    \]
    We call $I_r$ the \emph{$r$-th unit tensor}. 
\end{definition}

We recall that the rank of a tensor $T\in V_1 \otimes \dots \otimes V_d $ is the minimal integer $r$ for which $T$ is equal to the sum of $r$ rank-one tensors, which we denote by
\[
    \rank (T) := \min \{r \mid \exists \, v_{ij} : T = \sum_{j = 1}^r v_{1j} \otimes \ldots \otimes v_{dj} \} .
    \]
We will sometimes use the following description of the rank, which is
analogous to the definition of the subrank. The rank of a tensor $T$ is the minimal integer $r$, for which there exist linear maps $\varphi _i :k^r \to V_i$ sending $I_r$ to $T$:
\[
 \rank (T) = \min \{r \mid \exists \text{ linear maps } \varphi_i: k ^{r} \to V_i ; (\varphi_1 \otimes \dots \otimes \varphi_d) I_r = T \} .
 \]
Using this description of the rank, one can think of the subrank as being a dual notion to the rank of a tensor. 

\bigskip

When $d=2$, the tensor rank and subrank equal the matrix rank of $T$.
The definition implies the following two observations, which can be
helpful to  bound the subrank from above. When regarding $T$ as a
linear map $\bigotimes_ {j \neq i} V_j^\ast \to V_i$, the subrank
$Q(T)$ is at most the rank of this map, and hence the subrank is
bounded from above by $\min _i \dim (V_i)$. Second, assume that $T \in
V_1 \otimes \ldots \otimes V_d$, all dimensions are equal ($n:= \dim
(V_1) = \ldots = \dim (V_d)$), and the subrank is maximal ($Q(T) =
n$), i.e.~there exist linear maps $\varphi _i$ with $(\varphi_1
\otimes \dots \otimes \varphi_d)T = I_n$. Then it follows that $T$ is
concise and that the linear maps are invertible, which implies that
also the rank $R(T)$ of $T$ equals $n$. The same argument applies if $T$
is concise and the rank of $T$ is minimal (namely, $n$), so we obtain:

\begin{re}
\label{rm:maxsubrank=rank}
    Let $T \in V_1 \otimes \ldots \otimes V_d$ be a tensor with $n = \dim (V_1) = \ldots = \dim (V_d)$; then 
    \[
    \rank (T) = n \text{ and $T$ is concise}\qquad  \Leftrightarrow \qquad Q(T) = n. \qedhere
    \]
\end{re}

In general, it is difficult to find upper bounds on the subrank of a given tensor. The \emph{slice rank} of $T \in V_1 \otimes \ldots \otimes V_d $ is the minimal sum $\sum _{i = 1 }^d \dim (U_i)$ where $U_i $ are linear subspaces of $V_i$ such that 
\[
T \in \sum _{i = 1 }^d V_1 \otimes \ldots \otimes V_{i-1} \otimes U_i
\otimes V_{i+1} \otimes \ldots \otimes V_d; 
\] 
see \cite{Tao16}, where it is also shown that the slice rank $\leq r$
locus is closed.  Using the fact that slice rank can only drop under
linear transformations and that the slice rank of $I_r$ equals $r$,
we find that the slice rank is an upper bound to the subrank. However,
this bound is by no means tight for sufficiently general tensors: the
generic subrank is strictly smaller than the generic slice rank: the
latter equals $n$ when all $V_i$ have dimension $n$, while the former
is $O(n^{1/(d-1)})$ \cite{DMZ24}.

In the remainder of this paper we will restrict ourselves to the
subrank of real tensors $T \in  \RR ^{n_1} \otimes \dots \otimes \RR
^{n_d}$, so all the linear maps in the definition are $\RR$-linear: 
\[
    Q(T) = \max \{r \mid \exists \, \RR\text{-linear maps } \varphi_i: \RR ^{n_i}  \to \RR ^{r}  \text{ with } (\varphi_1 \otimes \dots \otimes \varphi_d)T = I_r \} .
\]
One can ask what happens if we relax the definition of subrank by
allowing the linear maps to be complex, which leads us to the
following definition:
\begin{definition}
     Let $T\in \RR ^{n_1} \otimes \dots \otimes \RR ^{n_d} $. Then the \emph{complex subrank} of a real tensor $T$ is defined as
    \[
    Q_\CC (T) := \max \{r \mid \exists \, \CC\text{-linear maps } \varphi_i: \CC ^{n_i}  \to \CC ^{r}  ; (\varphi_1 \otimes \dots \otimes \varphi_d)T = I_r \} .
    \]
\end{definition}

For real matrices $A \in \RR^{n_1} \otimes \RR^{n_2}$ we know that the subrank equals the matrix rank of $A$, and this is the same over all possible field extensions, so $Q(A) = Q_\CC (A)$. However, for higher-order tensors, the complex subrank only provides an upper bound on the real subrank.
\begin{ex}\label{ex:subrankC}
    The real tensor 
    \[
    T:= (e_1 + i e_2)^{\otimes 3} + (e_1 - i e_2)^{\otimes 3} = 2 (e_1
    \otimes e_1 \otimes e_1 - e_1 \otimes e_2 \otimes e_2 - e_2
    \otimes e_1 \otimes e_2 - e_2 \otimes e_2 \otimes e_1)  \in \RR
    ^2 \otimes \RR ^2 \otimes \RR ^2  \] has $Q(T)=1$ as $R(T)=3$, which can be shown using the Cayley hyperdeterminant (see Section~\ref{sec:2xnxm}).
    However, we have $R_\CC(T)=2$ and hence $Q_\CC(T)=2$ by 
    Remark~\ref{rm:maxsubrank=rank}. 
\end{ex}

Let $T \in \RR ^n \otimes \RR ^n \otimes \RR ^n$. For tensor rank, it
is known that the rank of a real order-$d$ tensor is bounded by $d$
times its complex rank \cite{Ballico-upperboundrealtensorrank}. So it
is natural to expect a similar result for subrank. 
In Section~\ref{sec:bound}, we establish such a result, as follows:

\begin{thm}[Theorem~\ref{thm:bound} below]
For any order-three tensor $T$ we have 
\[Q(T) \geq \lfloor \sqrt{ Q_\CC (T)} \rfloor.\]
\end{thm}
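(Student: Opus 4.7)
The plan is to pass from a $\CC$-linear reduction witnessing $s := Q_\CC(T)$ to an $\RR$-linear reduction of size $r := \lfloor\sqrt{s}\rfloor$. I would fix $\CC$-linear maps $\varphi_i : \CC^{n_i} \to \CC^s$ with $(\varphi_1 \otimes \varphi_2 \otimes \varphi_3)(T) = I_s$, and write their restrictions to $\RR^{n_i}$ as $\varphi_i|_{\RR^{n_i}} = A_i + iB_i$ with $A_i, B_i : \RR^{n_i} \to \RR^s$ being $\RR$-linear. Since $T$ is real, comparing the real and imaginary parts of $(\bigotimes_\CC \varphi_i)(T) = I_s$ yields two $\RR$-tensor identities in $(\RR^s)^{\otimes 3}$:
\begin{align*}
(A_1 \otimes A_2 \otimes A_3 - A_1 \otimes B_2 \otimes B_3 - B_1 \otimes A_2 \otimes B_3 - B_1 \otimes B_2 \otimes A_3)(T) &= I_s, \\
(B_1 \otimes A_2 \otimes A_3 + A_1 \otimes B_2 \otimes A_3 + A_1 \otimes A_2 \otimes B_3 - B_1 \otimes B_2 \otimes B_3)(T) &= 0.
\end{align*}
By postcomposing each $\varphi_i$ with the coordinate projection onto any $r^2$ of the $s$ coordinates, I may assume $s = r^2$ and identify $[s] = [r] \times [r]$, so that $I_s = \sum_{(p,q)\in [r]^2} e_{pq}^{\otimes 3}$.

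Next I would build $\RR$-linear maps $\psi_i : \RR^{n_i} \to \RR^r$ whose row functionals are carefully chosen linear combinations of the rows of $A_i$ and $B_i$ (which are labelled by pairs $(p,q) \in [r]\times[r]$). The strategy is to tie the output index $j \in [r]$ of $\psi_i$ to one factor of the grid while averaging and twisting over the other, so that the cubic interaction $(\psi_1 \otimes \psi_2 \otimes \psi_3)(T)$ collapses to $I_r$: the diagonal contributions $(j, j, j)$ combine via the real-part identity above to yield $1$, while the off-diagonal cross-terms (which carry odd numbers of $B_i$'s) cancel via the imaginary-part identity.

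The main obstacle is orchestrating these cancellations consistently, and this is where the square-root bound enters. The construction involves $O(rs)$ real coefficients against $O(r^3)$ tensor equations, so a solution is expected precisely in the regime $r^2 \le s$; verifying that the coefficients can be chosen to satisfy all the equations simultaneously will form the technical heart of the proof, and I expect it to be the hardest step.
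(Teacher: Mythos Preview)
Your proposal sets up the real/imaginary decomposition cleanly, but it stops exactly where the difficulty lies: you never specify the combinations defining the $\psi_i$, and the heuristic parameter count does not establish that a solution exists. The obstruction is structural. For each triple $(a,b,c) \in [s]^3$ the two identities you wrote down give only two $\RR$-linear relations among the eight scalars $\bigl((X_1)_a \otimes (X_2)_b \otimes (X_3)_c\bigr)(T)$ with $X_i \in \{A_i,B_i\}$; the remaining six degrees of freedom depend on $T$ and are unknown to you. When you expand $(\psi_1 \otimes \psi_2 \otimes \psi_3)(T)$ with the $\psi_i$ written as combinations of rows of $A_i,B_i$, the expression is \emph{cubic} in your unknown coefficients, and its value involves all eight of those scalars, not just the two known combinations. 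So there is no reason the off-diagonal terms ``cancel via the imaginary-part identity'': that identity kills one specific four-term combination, not the generic cross terms your expansion produces. A parameter count for a cubic system over $\RR$ does not imply solvability; you would need an actual mechanism forcing the unwanted terms to vanish for every real $T$, and none is proposed.

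The paper avoids explicit construction of the real maps altogether. Instead of aiming for $I_r$ directly, it isolates an intermediate \emph{Zariski-open} condition on triples of linear maps---that after applying them the slices of the image along the first axis span the full space of $r \times r$ matrices---and shows two things: (i) some \emph{complex} triple satisfies it (built by an explicit post-composition starting from $I_s$), and (ii) any triple satisfying it yields real subrank $\geq r$ via Lemma~\ref{lem:spanningsetofslices}. Because the condition is Zariski open and the real matrices are Zariski dense in the complex ones, a real triple with the property exists automatically. The idea you are missing is this detour through a Zariski-open property; it is what transfers the complex witness to a real one without ever solving for the real maps explicitly.
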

We do not think, however, that this bound is close to sharp; in
particular, we do not have examples where $Q(T)$ grows more slowly than
linearly in $Q_\CC(T)$. 

\bigskip 

Over the complex numbers, the locus of tensors with subrank $r$ in
$\CC ^{n_1} \otimes \CC ^{n_2} \otimes \CC ^{n_3}$ is a constructible
set. Therefore, there exists a unique $r$ for which the locus of
tensors with subrank $r$ is dense. This $r$ is called the generic
subrank of tensors in $\CC ^{n_1} \otimes \CC ^{n_2} \otimes \CC
^{n_3}$ \cite{DMZ24}. 

Over the real numbers, there does not have to be a dense subset in
the Euclidean topology where the subrank is constant. This leads us to the following definition.
\begin{definition}
    A subrank $r$ is called \emph{typical} if the set of tensors in $\RR ^{n_1} \otimes \RR ^{n_2} \otimes \RR ^{n_3}$ with subrank $r$ contains a nonempty open subset with respect to the Euclidean topology.
\end{definition}

For tensor rank, we know that any rank between the smallest typical rank (which is the generic rank) and the largest typical rank is also typical \cite{BBO18}, where the definition of the typical rank is analogous to that of typical subrank. In Section~\ref{sec:consecutive}, we prove that the same is true for typical subranks:

\begin{thm}
\label{thm:consecutive}
    Let $r$ and $s$ be typical subranks of tensors in $\RR ^{n_1} \otimes \RR^{n_2} \otimes \RR^{n_3}$ with $r \leq s$. Then all integers $l $ with $r \leq l \leq s$ are also typical subranks.
\end{thm}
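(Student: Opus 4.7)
The plan is to adapt the approach of Bernardi--Blekherman--Ottaviani \cite{BBO18} to the subrank setting. Since $r$ and $s$ are typical subranks, the Euclidean interiors $U_r$ and $U_s$ of the loci $\{T : Q(T) = r\}$ and $\{T : Q(T) = s\}$ are nonempty. Pick $T_r \in U_r$ and $T_s \in U_s$, and consider the segment $T_\lambda := (1-\lambda)T_r + \lambda T_s$ for $\lambda \in [0,1]$, together with the function $f(\lambda) := Q(T_\lambda)$.

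By the Tarski--Seidenberg theorem applied to the existential definition of $Q$, the function $f$ is semialgebraic, hence piecewise constant on finitely many semialgebraic cells of $[0,1]$; it equals $r$ on a subinterval at $\lambda = 0$ and $s$ on a subinterval at $\lambda = 1$. For a sufficiently generic choice of $T_r \in U_r$ and $T_s \in U_s$, the segment is transverse to every semialgebraic stratum $\{Q = l'\}$. Since non-typical strata have positive codimension in the ambient space $\RR^{n_1} \otimes \RR^{n_2} \otimes \RR^{n_3}$, the function $f$ attains non-typical values only at isolated points, whereas typical values appear on open subintervals. Hence there exist consecutive open subintervals $I_0, I_1, \ldots, I_k \subset [0,1]$ on which $f$ is constantly equal to typical subranks $t_0 = r, t_1, \ldots, t_k = s$.

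The heart of the proof is to show that $|t_{j+1} - t_j| = 1$ for each $j$. In the spirit of BBO, this is accomplished by a local perturbation argument. Suppose at the transition $\lambda^\ast$ between $I_j$ and $I_{j+1}$ the jump $|t_{j+1} - t_j|$ is at least two. Consider the parametrization $(\varphi_1, \varphi_2, \varphi_3, T) \mapsto T$, with the first three maps surjective and the constraint $(\varphi_1 \otimes \varphi_2 \otimes \varphi_3)(T) = I_l$ for a target intermediate value $l$ with $t_j < l < t_{j+1}$. A local dimension count of the fiber of this parametrization over $T_{\lambda^\ast}$ shows that one can perturb $T_r$ (or $T_s$) slightly inside $U_r$ (resp.\ $U_s$) to obtain a new segment on which the intermediate typical stratum $\{Q = l\}$ is crossed on a subinterval of positive length---contradicting the maximality of the gap.

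The main obstacle is this perturbation lemma: it requires a careful local analysis of the subrank stratification at stratum boundaries, adapting BBO's rank-stratification argument to the image of the surjective-$\varphi_i$ parameter space. Once the lemma is in place, the sequence $t_0, t_1, \ldots, t_k$ advances by exactly one at each step, covering every integer in $[r, s]$, so each such integer is a typical subrank.
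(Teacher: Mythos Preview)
Your argument has a genuine gap at the step you yourself flag as ``the main obstacle.'' You want to show that the consecutive typical values $t_j,t_{j+1}$ along your segment satisfy $|t_{j+1}-t_j|=1$, and you propose to do this by contradiction: if the jump is at least $2$, perturb the endpoints so that the new segment meets the stratum $\{Q=l\}$ (for an intermediate $l$) on a subinterval of positive length. But this is circular. You are precisely trying to prove that $l$ is typical, i.e.\ that $\{Q=l\}$ has nonempty interior; if $l$ were \emph{not} typical, that stratum lies in a proper semialgebraic set of positive codimension, and a generic segment meets it at most in isolated points---no perturbation of $T_r,T_s$ inside their open sets changes that. Your ``dimension count of the fiber'' parametrises $\mathcal{C}_l=\{Q\geq l\}$, which is always full-dimensional once $l$ is below the generic subrank (it contains $\mathcal{C}_{t_{j+1}}$); it says nothing about the \emph{exact} level set $\{Q=l\}=\mathcal{C}_l\setminus\mathcal{C}_{l+1}$, which is what you need.

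What is missing is the analogue of the key BBO lemma: the paper first proves (Lemma~\ref{lm:addingsimpletensor}) that for any tensor $T$ and any rank-one $v$ one has $Q(T+v)\geq Q(T)-1$. This is the only place where any nontrivial information about subrank enters, and it is what controls the ``jump size.'' The paper then does \emph{not} move along a straight segment---the direction $T_s-T_r$ is not rank-one, so the lemma would not apply there. Instead it assumes $r$ is typical but $r-1$ is not, uses the semialgebraic fact $\inter(\clos\mathcal{C}_r)\subseteq\clos(\inter\mathcal{C}_r)$ to get $\inter\mathcal{C}_{r-1}\subseteq\clos(\inter\mathcal{C}_r)$, and then shows via Lemma~\ref{lm:addingsimpletensor} that $\clos(\inter\mathcal{C}_r)$ is closed under adding an arbitrary rank-one tensor. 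Since rank-one tensors span the whole space, this forces $\clos(\inter\mathcal{C}_r)=\RR^{n_1}\otimes\RR^{n_2}\otimes\RR^{n_3}$, so $r$ is the smallest typical subrank. Without Lemma~\ref{lm:addingsimpletensor} or an equivalent, your transversality framework has no mechanism to bound the jump, and the proof does not close.
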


\bigskip

Both 2 and 3 are typical ranks of real $2\times 2 \times 2$ tensors \cite{Kruskal89}; combining this fact with Remark~\ref{rm:maxsubrank=rank}, this gives us that 1 and 2 are both typical subranks of tensors in $\RR ^2 \otimes \RR ^2 \otimes \RR^2$. In Section~\ref{sec:subrankforspecificorder-threeformats}, we look at more examples of $n_1 \times n_2 \times n_3$ tensor spaces and describe the typical subranks. We summarize our findings in the following theorem.
\begin{thm}
    Let $\RR ^{n_1} \otimes \RR ^{n_2} \otimes \RR ^{n_3}$ the space of real order-three tensors.
    \begin{enumerate}
        \item \label{it:222} For $(n_1,n_2,n_3) = (2,2,2)$, the typical subranks are 1 and 2.
        \item For $n_1 = 2$, $n_2 \geq 2$ and $ n_3 > 2$, the only typical subrank is 2.
        \item For $(n_1,n_2,n_3) \in \left\{ (3,3,3),(3,3,4)\right\}$, the only typical subrank is 2.
        \item For $(n_1,n_2,n_3) = (3,3,5)$, the typical subranks are 2 and 3.
        \item For $(n_1,n_2,n_3) \in \left\{ (3,4,4), (4,4,4)\right\}$, the typical subranks are 2 and 3.
    \end{enumerate}
\end{thm}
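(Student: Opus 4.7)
Item (1) is already established in the paragraph preceding the statement: Kruskal's result gives typical real ranks $\{2,3\}$ for $\RR^2 \otimes \RR^2 \otimes \RR^2$; Remark~\ref{rm:maxsubrank=rank} upgrades rank-$2$ concise tensors to subrank $2$; and Example~\ref{ex:subrankC}, together with the Cayley-hyperdeterminant analysis of Section~\ref{sec:2xnxm}, supplies a Euclidean-open neighborhood of tensors with real rank $3$ and subrank $1$. For items (2)--(5) my plan is to separate \emph{upper bounds} ruling out subranks above the claimed maximum from \emph{openness around explicit witnesses} showing that every claimed value is typical.

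The trivial bound $Q(T) \leq \min_i n_i$ handles the upper side of (2), (4), and (5). For (3), I would combine $Q \leq Q_\CC$ with the computation that the generic complex subrank equals $2$ in both $(3,3,3)$ (directly from the Derksen--Makam--Zuiddam and Pielasa--{\v{S}}afránek--Shatsila formula $\lfloor \sqrt{3n-2} \rfloor$) and $(3,3,4)$ (by a short specialisation argument). The Zariski-open locus $\{Q_\CC \leq 2\}$ then has a proper algebraic complement, whose real points form a semialgebraic subset of strictly smaller real dimension; hence on a Euclidean-dense open subset of $\RR^{n_1} \otimes \RR^{n_2} \otimes \RR^{n_3}$ one has $Q \leq Q_\CC \leq 2$, ruling out any larger typical real subrank in (3).

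Given an $I_r$-witness $(T_0, \varphi_{1,0}, \varphi_{2,0}, \varphi_{3,0})$ with $(\varphi_{1,0} \otimes \varphi_{2,0} \otimes \varphi_{3,0}) T_0 = I_r$, openness of $\{T : Q(T) \geq r\}$ at $T_0$ would follow from the implicit function theorem applied to $F(T, \varphi) = (\varphi_1 \otimes \varphi_2 \otimes \varphi_3) T - I_r$: when the $\varphi_{i,0}$ are surjective and a transversal slice to the $(\RR^{\ast})^r$-stabiliser of $I_r$ is chosen, the partial derivative $\partial_\varphi F$ is surjective, so $\varphi$ can be solved as a smooth function of $T$ on a neighborhood of $T_0$ and the condition $Q \geq r$ persists there; combined with the upper bounds this gives $Q = r$ on that neighborhood. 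For the maximum claimed subranks the witness is just a concise tensor of rank $\min_i n_i$; the second typical subrank in (4) and (5) is provided, respectively, by the explicit $3 \times 3 \times 5$ tensor and by the real quaternion-multiplication tensor produced in Section~\ref{sec:subrankforspecificorder-threeformats}, each of which the paper shows has subrank exactly $2$.

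The main obstacles are the \emph{non-existence} statements $Q \leq 2$ for those two distinguished tensors: one must exclude every triple of $\RR$-linear maps producing $I_3$. For the quaternion tensor the argument should exploit that $\mathbb{H}$ has no zero-divisors, which forces strong incidence conditions on the images of the $\varphi_i$; for the $3 \times 3 \times 5$ tensor, a more delicate semialgebraic analysis of its pencil of $3 \times 3$ slices is needed. A secondary subtlety in (2) is ruling out subrank $1$ as typical, which amounts to showing that $\{Q \geq 2\}$ is Euclidean-dense --- this follows once one knows that the generic complex subrank in the format $2 \times n_2 \times n_3$ with $n_3 > 2$ equals $2$ and applies the openness argument to any real witness.
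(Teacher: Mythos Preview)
Your proposal has a genuine gap: you never supply a mechanism to rule out $1$ as a typical subrank in items (2)--(5). Your suggestion for (2)---``$\{Q\geq 2\}$ is Euclidean-dense follows once one knows the generic complex subrank equals $2$ and applies the openness argument to a real witness''---does not work: openness at a witness only produces \emph{one} Euclidean-open set on which $Q\geq 2$, not density, and knowing $Q_\CC=2$ generically tells you nothing about the real locus $\{Q_\RR\leq 1\}$, since a real tensor can have $Q_\CC=2$ and $Q_\RR=1$. The paper closes this gap with two ingredients you omit entirely. First, a direct argument for the base case $(2,2,3)$ (Lemma~\ref{lm:223}): a sufficiently general real $2\times2\times3$ tensor has real rank $3$, and from any rank-$3$ decomposition one writes down explicit maps $\varphi_i$ sending $T$ to $I_2$, so $Q(T)=2$ on a Zariski-open (hence Euclidean-dense) set. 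Second, a monotonicity lemma (Lemma~\ref{lem:scaling}): the minimal typical subrank is nondecreasing in each $n_i$, because projecting a full-dimensional open set in the larger format yields one in the smaller format with no larger subrank. Together these propagate the lower bound $2$ to every format in (2)--(5); the same lemma, read contrapositively, is what gives $(3,4,4)$ in (5) from $(4,4,4)$.

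A secondary issue is your implicit-function-theorem claim that $\partial_\varphi F$ is surjective at an arbitrary witness $(T_0,\varphi_0)$. This is false in general: already at $T_0=I_r$, $\varphi_{i,0}=\id$ with $n_i=r\geq 3$, the image of $\partial_\varphi F$ misses every tensor entry with three distinct indices, and passing to a slice transverse to the $(\RR^\ast)^r$-stabiliser does not help. The paper's argument (Section~\ref{sec:consecutive}) is different: it uses dominance of the morphism $\psi_r:\GL_{n_1}\times\GL_{n_2}\times\GL_{n_3}\times X_r\to \RR^{n_1}\otimes\RR^{n_2}\otimes\RR^{n_3}$ over $\CC$, then generic smoothness to find \emph{some} real point where the differential is surjective---not the specific witness you start from. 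For the hard direction in (4) and (5), namely openness of $\{Q\leq 2\}$ at the explicit $3\times3\times5$ tensor and at the quaternion tensor, the paper's arguments are concrete: the $3\times3\times5$ slices span the traceless symmetric $3\times3$ matrices, which contain no real rank-one matrix (real eigenvalues summing to zero force rank $\geq 2$); for the quaternion tensor one uses that every nonzero slice combination is invertible together with a compactness argument over $S^3\times S^3$ to get openness. Your sketch points in the right direction for these but does not supply the actual mechanisms.
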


To give an upper bound for the subrank of some concrete tensors $T \in
\RR ^{n_1} \otimes \RR ^{n_2} \otimes \RR^{n_3}$, we use a simple
geometric condition for the subrank of $T$ to be at least some given
integer $r$, namely, that the linear space spanned by the slices of the tensor along one axis contains at least $r$ linearly independent rank-one matrices.

\bigskip
In the final section of this paper, we turn to real division algebras
and prove the following theorem.

\begin{thm}[Theorem~\ref{thm:upperBoundQDivision} and
Corollary~\ref{cor:subrankCandH} below]
    Let $D$ be a division algebra over $\RR$ of dimension $\geq 2$. Let $f_n$ be the componentwise multiplication map  
    \[
    f_n: D^n \times D^n \to D^n; (a,b)=((a_1, \ldots a_n),(b_1, \ldots , b_n)) \mapsto ( a_1  b_1, \ldots , a_n  b_n)=: a* b.
    \]
    Regarding $f_n$ as an $\RR$-bilinear map, we have $Q(f_n) \leq nd$,
    where $d = \frac{1}{2} \dim _\RR D$. In particular, for $D=\CC$ we have
    $Q(f_n) = n$ and for $D=\HH$, the quaternions, we have $Q(f_n)=2n$.
\end{thm}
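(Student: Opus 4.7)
The approach is via complexification. Setting $D_\CC := D \otimes_\RR \CC$, a $\CC$-algebra of complex dimension $2d$, the $\RR$-bilinear map $f_n$ extends $\CC$-bilinearly to componentwise multiplication $\widetilde{f_n}$ on $D_\CC^n$. Any real realization of $I_r$ as a subrank of $f_n$ complexifies to a $\CC$-linear realization inside $\widetilde{f_n}$, hence $Q(f_n) \leq Q_\CC(\widetilde{f_n})$, and the problem reduces to bounding the complex subrank of $\widetilde{f_n}$.

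For the non-commutative cases, in particular $D = \HH$ with $D_\CC \cong M_2(\CC)$, the plan is to bound $Q_\CC(\widetilde{f_n}) \leq nd$ by analyzing how a subrank embedding interacts with the semisimple block structure of $D_\CC^n$: the elements $\widetilde{\varphi_1}(e_i)\widetilde{\varphi_2}(e_i)$ behave like orthogonal primitive idempotents modulo $\ker \widetilde{\varphi_3}$, and $D_\CC^n$ admits at most $nd$ such idempotents. For $D = \CC$, where $D_\CC \cong \CC \oplus \CC$, the complex bound yields only $Q(f_n) \leq 2n$; to close this factor-of-two gap, I would use the real structure $\sigma$ on $(\CC \oplus \CC)^n$ that swaps the two summands and conjugates entries. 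The complexifications $\widetilde{\varphi_i}$ of the original real maps must intertwine $\sigma$ with ordinary complex conjugation on $\CC^r$, forcing $\widetilde{\varphi_1}(a) = (\alpha(a), \overline{\alpha(\bar a)})$ for some $\CC$-linear $\alpha : \CC^r \to \CC^n$, and similarly for $\widetilde{\varphi_2}$ and $\widetilde{\varphi_3}$. Substituting these into the subrank equation yields a relation $F(a,b) + \overline{F(\bar a, \bar b)} = a \odot b$ with $F := \gamma(\alpha(\cdot) \odot \beta(\cdot))$ a $\CC$-bilinear map of complex tensor rank at most $n$; exploiting the Galois symmetry relating $F$ to its conjugate should then force $r \leq n$.

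The hardest step is the final implication in the $D = \CC$ case: naively, $I_r = F + \bar F$ with each summand of complex rank at most $n$ only gives $r \leq 2n$. A sharper argument must use that $F$ and $\bar F$ are tied together by the real structure rather than being independent complex tensors of rank $n$. A related subtlety appears in the non-commutative cases, where linking the complex subrank of the direct-sum tensor $\widetilde{f_n}$ to the block structure of $D_\CC^n$ requires care, since subrank is not additive under direct sums in general.
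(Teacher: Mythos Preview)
Your proposal has genuine gaps, several of which you acknowledge yourself.

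First, the complexification route loses exactly the factor of two you need. For $D=\CC$ you correctly observe that $D_\CC \cong \CC \oplus \CC$ gives $Q_\CC(\widetilde{f_n}) = 2n$, and you then say that ``exploiting the Galois symmetry \dots\ should then force $r \leq n$,'' but you do not carry this out; in your final paragraph you explicitly concede that $I_r = F + \bar F$ with each summand of complex rank $\leq n$ only yields $r \leq 2n$, and that you do not yet have the sharper step. That is precisely the heart of the theorem, so the proof is incomplete. A similar problem arises for $\HH$: the heuristic that the $\widetilde{\varphi_1}(e_i)\widetilde{\varphi_2}(e_i)$ ``behave like orthogonal primitive idempotents modulo $\ker \widetilde{\varphi_3}$'' is not justified --- these products are neither idempotent nor orthogonal in the algebra, they merely satisfy a linear condition after applying $\varphi_3$. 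Moreover, you yourself flag that subrank is not additive on direct sums, so even if you knew the complex subrank of $M_2(\CC)$ multiplication, passing to $M_2(\CC)^n$ is another unresolved step.

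Second, the theorem is stated for an \emph{arbitrary} real division algebra of dimension $\geq 2$, with no associativity or alternativity assumed. Your argument is hard-wired to the specific structure of $\CC$ and $\HH$ via their complexifications; it does not address the eight-dimensional case at all, nor the many non-isomorphic real division algebras in dimensions $2$, $4$, $8$ that are not $\CC$, $\HH$, or $\mathbb{O}$.

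By contrast, the paper never complexifies. It uses only the defining property of a division algebra (invertibility of left multiplication by nonzero elements) and proceeds by induction on $n$. Assuming vectors $u_1,\ldots,u_r \in D^n$ and $v_1,\ldots,v_r \in D^n$ realise subrank $r>nd$, one singles out the support of $u_r$ (say the last $k$ coordinates), splits $r=r_1+r_2$ according to the rank and kernel of left multiplication by $u_r$ restricted to $\langle v_1,\ldots,v_r\rangle_\RR$, applies the induction hypothesis to the projection onto the first $n-k$ coordinates to bound $r_2 \leq (n-k)d$ and to produce a $kd$-dimensional space of relations, and then forces a forbidden linear dependence by a dimension count inside the $2kd$-dimensional space $\{0\}^{n-k}\times D^k$. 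This argument is uniform in $D$ and in $n$, and it is where the factor $\tfrac12\dim_\RR D$ appears naturally.
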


We stress that this result for arbitrary $n$ does not trivially follow
from the case where $n=1$, since the subrank of tensors can be strictly
superadditive \cite{DMZ24}.

\section{Real and complex subrank}

\label{sec:bound}

Given the complex subrank of a real tensor, we are interested in finding
bounds on the real subrank. Let $T \in \RR^{n_1} \otimes \RR^{n_2}
\otimes \RR^{n_3}$ be a tensor. As discussed above, we have the trivial
upper bound $Q (T) \leq Q_\CC (T)$.

In the following, we first consider an order-three tensor $T$ of any format $n_1 \times n_2 \times n_3$ and provide a lower bound of $Q(T)$ as a function of the complex subrank of $T$. We then show that this bound can be improved in the case that $Q_{\CC} (T)= n_1 = n_2 = n_3$. For these results, we use the following lemma.
\begin{lem}
\label{lem:spanningsetofslices}
    Let $T \in \RR^r \otimes \RR^s \otimes \RR ^s $ be a tensor where the  $r$ slices $T_1, \ldots , T_r \in \RR^s \otimes \RR ^s$ along the first axis are a spanning set of the space of $s\times s$ matrices. Then $Q(T) = s$.
\end{lem}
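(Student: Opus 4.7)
The plan is to show both inequalities $Q(T) \leq s$ and $Q(T) \geq s$ separately.

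For the upper bound, I would simply invoke the general observation recorded earlier in the introduction: regarding $T$ as a linear map on two of its factors, one obtains $Q(T) \leq \min_i \dim(V_i) = s$.

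For the lower bound, the key idea is that the map $\varphi_1 : \RR^r \to \RR^s$ can be chosen to ``select'' any $s$ matrices we want from the span of the slices $T_1, \ldots, T_r$. Concretely, write $T = \sum_{j=1}^r e_j \otimes T_j$. Applying a linear map $\varphi_1$ with $\varphi_1(e_j) = (w_{j,1}, \ldots, w_{j,s})$ gives
\[
(\varphi_1 \otimes \mathrm{id} \otimes \mathrm{id})(T) = \sum_{i=1}^s e_i \otimes \Bigl(\sum_{j=1}^r w_{j,i}\, T_j\Bigr),
\]
so the $i$-th slice of the transformed tensor along the first axis is the linear combination $\sum_j w_{j,i} T_j$ of the original slices. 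Since by hypothesis $T_1, \ldots, T_r$ span $\RR^s \otimes \RR^s$, for each $i = 1, \ldots, s$ I can choose coefficients $w_{1,i}, \ldots, w_{r,i}$ so that this combination equals the rank-one matrix $e_i \otimes e_i$. This defines $\varphi_1$ explicitly, and yields
\[
(\varphi_1 \otimes \mathrm{id} \otimes \mathrm{id})(T) = \sum_{i=1}^s e_i \otimes e_i \otimes e_i = I_s.
\]
Taking $\varphi_2 = \varphi_3 = \mathrm{id}$ then proves $Q(T) \geq s$.

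There is essentially no serious obstacle: the lemma is really just the observation that having a spanning set of slices lets one linearly solve for any desired configuration of $s$ slices, and the diagonal configuration $\{e_i \otimes e_i\}$ is exactly what produces $I_s$. The only thing to be careful about is keeping the direction of the maps straight (the first map goes \emph{into} $\RR^s$, matching the subrank convention for the factor of dimension $r$), which is handled by the identification above.
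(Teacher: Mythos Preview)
Your proof is correct and follows essentially the same approach as the paper's: both choose $\varphi_1:\RR^r\to\RR^s$ so that the $i$-th new slice becomes the chosen combination $\sum_j c_{ij}T_j = e_i\otimes e_i$, and both take $\varphi_2=\varphi_3=\mathrm{id}$, with the upper bound coming from the dimension constraint $Q(T)\leq s$.
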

\begin{proof}
    For all $i \in [s]:=\{1,\ldots,s\}$, there exist coefficients $c_{i1},\dots,c_{ir} \in \RR$ such that $e_i \otimes e_i = E_{ii} = c_{i1} T_1 + \ldots + c_{ir} T_r$. Let $\varphi _1: \RR ^r \to \RR ^s$ be the linear map defined by the matrix
    \[
    \begin{pmatrix}
        c_{11} & \cdots & c_{1r} \\
        \vdots &  & \vdots \\
        c_{s1} & \cdots & c_{sr} \\
    \end{pmatrix} .
    \]
    Then 
    \[
        (\varphi _1 \otimes \id \otimes \id)T = (\varphi _1 \otimes \id \otimes \id)\sum_{i = 1}^r e_i \otimes T_i = \sum_{i=1}^s e_i \otimes (c_{i1} T_1 + \ldots + c_{ir} T_r) 
        = \sum_{i=1}^s e_i \otimes e_i \otimes e_i = I_r.
    \]
    This shows $Q(T) \geq s$, and equality holds, as the subrank is bounded from above by the dimension of $\RR ^s$.
\end{proof}
\begin{thm}
\label{thm:bound}
    Let $T \in \RR^{n_1} \otimes \RR^{n_2} \otimes \RR^{n_3}$ be a real tensor. Then
    \[
    Q (T) \geq  \lfloor \sqrt{ Q_\CC (T)} \rfloor.
    \]
\end{thm}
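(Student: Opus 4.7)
The plan is to apply Lemma~\ref{lem:spanningsetofslices} after exhibiting suitable real linear maps. Let $s = Q_\CC(T)$ and $r := \lfloor \sqrt{s} \rfloor$, so that $r^2 \leq s$. Writing $T_1, \ldots, T_{n_1} \in \RR^{n_2} \otimes \RR^{n_3}$ for the slices of $T$ along the first axis, the goal is to produce real linear maps $\psi_2 \colon \RR^{n_2} \to \RR^r$ and $\psi_3 \colon \RR^{n_3} \to \RR^r$ such that the matrices $\psi_2 T_i \psi_3^\top$, $i = 1, \ldots, n_1$, span $\RR^{r \times r}$. Once these are in hand, Lemma~\ref{lem:spanningsetofslices} applied to $(\mathrm{id} \otimes \psi_2 \otimes \psi_3)(T) \in \RR^{n_1} \otimes \RR^r \otimes \RR^r$ gives subrank $r$ there, and since subrank cannot increase under linear maps we conclude $Q(T) \geq r$.

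I would first construct such maps over $\CC$. Unpacking the definition of complex subrank, there exist complex linear maps $\varphi_1, \varphi_2, \varphi_3$ with $(\varphi_1 \otimes \varphi_2 \otimes \varphi_3)(T) = I_s$, from which one extracts $s$ complex linear combinations $u_1, \ldots, u_s$ of the slices $T_i$ (given explicitly by the rows of the matrix of $\varphi_1$), lying in the complexified slice span $W_\CC := \Span_\CC(T_1, \ldots, T_{n_1})$ and satisfying $\varphi_2 u_j \varphi_3^\top = E_{jj} \in \CC^{s \times s}$. Since $s \geq r^2$, I would fix any surjection $\sigma \colon \{1, \ldots, s\} \twoheadrightarrow \{1, \ldots, r\}^2$, write $\sigma(j) = (a_j, b_j)$, and let $\pi_2, \pi_3 \colon \CC^s \to \CC^r$ be the complex linear maps sending $e_j$ to $e_{a_j}$ and $e_{b_j}$ respectively. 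Setting $\Psi_2 := \pi_2 \varphi_2$ and $\Psi_3 := \pi_3 \varphi_3$ yields $\Psi_2 u_j \Psi_3^\top = E_{a_j b_j}$; as $j$ ranges over $\{1, \ldots, s\}$ these hit every elementary matrix $E_{ab}$ with $(a,b) \in \{1, \ldots, r\}^2$, so the complex map $X \mapsto \Psi_2 X \Psi_3^\top$ surjects $W_\CC$ onto $\CC^{r \times r}$.

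To pass from complex to real maps I would use a standard real-algebraic argument. Fixing a real basis of the slice span $W := \Span_\RR(T_1, \ldots, T_{n_1}) \subseteq \RR^{n_2} \otimes \RR^{n_3}$, the map $X \mapsto \psi_2 X \psi_3^\top$ on $W$ is represented by a matrix whose entries are polynomials with real coefficients in the entries of $\psi_2$ and $\psi_3$. Surjectivity onto $\RR^{r \times r}$ is the non-vanishing of at least one $r^2 \times r^2$ minor, which is a real polynomial in those entries; by the previous step this polynomial does not vanish at the complex pair $(\Psi_2, \Psi_3)$, so it is nonzero as a real polynomial and therefore has non-vanishing real points in $\RR^{r \times n_2} \times \RR^{r \times n_3}$, furnishing the required real maps.

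The main conceptual step is the complex-side pairing $\sigma$, which converts the jointly diagonalisable structure furnished by complex subrank $s$ into a genuine surjection onto an $r \times r$ matrix space and is precisely what forces the square-root dependence. The subsequent descent from complex to real maps is then routine, relying on the elementary fact that a nonzero real polynomial cannot vanish identically on its real points.
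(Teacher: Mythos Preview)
Your proof is correct and follows the same strategy as the paper: construct complex linear maps on the second and third factors so that the transformed slices span the full space of $r \times r$ matrices, pass to real maps by Zariski density (equivalently, the non-vanishing of a real polynomial minor), and then invoke Lemma~\ref{lem:spanningsetofslices}. Your complex-side construction via the surjection $\sigma$ and the collapsing projections $\pi_2,\pi_3$ is in fact slightly more direct than the paper's, which composes elementary operations of the form $I + E_{ji}$ on $I_r$ before projecting to the top-left block, but the underlying idea is the same.
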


\begin{proof}
Set $r:=Q_\CC(T)$ and let $s$ be the largest integer such that $s^2
\leq r$. We will show below that there exist {\em complex} linear maps
$\phi_1 :\CC ^{n_1} \to \CC ^r$ and $\phi _{i}: \CC^{n_i} \to \CC^s$,
for $i = 2,3$, such that the slices of $(\phi_1 \otimes \phi _2
\otimes \phi _3)T$ along the first axis are a spanning set of the
space of $s \times s$ matrices. Since this is a Zariski-open condition on the
triple $(\phi_1,\phi_2,\phi_3)$ and the set of real matrices is
Zariski dense in the set of complex matrices, this implies the
existence of {\em real} linear maps $\rho_1 :\RR ^{n_1} \to \RR ^r$ and
$\rho_{i}: \RR^{n_i} \to \RR^s\ (i=2,3)$ such that the slices of $(\rho_1 \otimes \rho_2 \otimes \rho_3)T$ span the set of $s\times s$ real matrices. 
    Lemma~\ref{lem:spanningsetofslices} then implies that $Q((\rho_1 \otimes \rho _2 \otimes \rho _3)T)\geq s$.  

    By definition of $r$, there exist complex linear maps $\varphi _i:\CC^{n_i} \to \CC^r$ such that $(\varphi_1 \otimes \varphi _2 \otimes \varphi _3 ) T = I_r$. There is an injective map $f:[s^2]   \to [s] \times [s] $ with $f(i)= (i,i)$ for $i \leq s$ and $f(i) = (j,k)$, where $j \neq k$, for $i > s$. 
    Let $P$ be the set of all triples $(i,j,k) = (i, f(i))$ with $i >
    s$ and $j,k \leq s$. Define for each triple $(i,j,k)$ in $P$ the map $\Psi_{ijk} :=
    \id \otimes \psi _{ij} \otimes \psi _{ik}$, where $\psi _{ij}  :
    \CC ^r \to \CC^r $ is the linear map defined by the matrix $I_r +
    E_{ji}$, and similarly for $\psi _{ik}$. Then 
    \[
   \Psi _{ijk} I_r = I_r + e_i \otimes e_j \otimes e_i + e_i \otimes e_i \otimes e_k + e_i \otimes e_j \otimes e_k.
    \]
The following figure describes this image of $I_r$ under the map $\Psi _{ijk}$, where the elements 
\[
e_i \otimes e_i \otimes e_i, \qquad e_i \otimes e_j \otimes e_i,\qquad e_i \otimes e_i \otimes e_k,\qquad  e_i \otimes e_j \otimes e_k
\]
lie in the $i$-th horizontal slice shown in blue. 
    \[
     \begin{tikzpicture}
        \draw[black] (0,3,0) --(3,3,0) -- (3,3,3) --(0,3,3)--(0,3,0) -- cycle;
        \draw[black] (0,3,3) --(3,3,3) -- (3,0,3) --(0,0,3)--(0,3,3) -- cycle;
        \draw[black] (3,3,3) --(3,3,0) -- (3,0,0) --(3,0,3)--(3,3,3) -- cycle;
        \node  at (0.3, 2.7, 2.7) {1};
        \node  at (0.55, 2.45, 2.45) {$\cdot $};
        \node  at (0.65, 2.35, 2.35) {$\cdot $};
        \node  at (0.75, 2.25, 2.25) {$\cdot $};
        \node  at (1, 2, 2) {1};

        \draw[black] (1.2,3,3) --(1.2,3,1.8) -- (1.2,1.8,1.8) -- (1.2,1.8,3) -- cycle;
        \draw[black] (1.2,3,1.8) --(0,3,1.8) -- (0,3,3) -- (1.2,3,3) -- cycle;
        \draw[black] (0,3,3) --(1.2,3,3) -- (1.2,1.8,3) --(0,1.8,3)-- cycle;

        \node  at (1.4, 1.6, 1.6) {};
        \node  at (1.6, 1.4, 1.4) {$\cdot $};
        \node  at (1.7, 1.3, 1.3) {$\cdot $};
        \node  at (1.5, 1.5, 1.5) {$\cdot $};
        \node  at (2.0, 1, 1) {1};
        \node  at (2.35, 0.65, 0.65) {$\cdot $};
        \node  at (2.45, 0.55, 0.55) {$\cdot $};
        \node  at (2.55, 0.45, 0.45) {$\cdot $};
        \node  at (2.8, 0.2, 0.2) {1};

        \draw[|->] (4,1.5,1.5)--(5,1.5,1.5);

        \draw[black] (6.5+0,3,0) --(6.5+3,3,0) -- (6.5+3,3,3) --(6.5+0,3,3)--(6.5+0,3,0) -- cycle;
        \draw[black] (6.5+0,3,3) --(6.5+3,3,3) -- (6.5+3,0,3) --(6.5+0,0,3)--(6.5+0,3,3) -- cycle;
        \draw[black] (6.5+3,3,3) --(6.5+3,3,0) -- (6.5+3,0,0) --(6.5+3,0,3)--(6.5+3,3,3) -- cycle;
        \node  at (6.5+ 0.3, 2.7, 2.7) {1};
        \node  at (6.5+0.55, 2.45, 2.45) {$\cdot $};
        \node  at (6.5+0.65, 2.35, 2.35) {$\cdot $};
        \node  at (6.5+0.75, 2.25, 2.25) {$\cdot $};
        \node  at (6.5+1, 2, 2) {1};

        \draw[red, dotted] (6.5,1.8,3) --(6.5+1.2,1.8,3) -- (6.5+1.2,1.8,1.8) ;
        \draw[red] (6.5+1.2,3,1.8) --(6.5+0,3,1.8) -- (6.5+0,3,3) -- (6.5+1.2,3,3) -- cycle;

        \node  at (6.5+1.4, 1.6, 1.6) {};
        \node  at (6.5+1.6, 1.4, 1.4) {$\cdot $};
        \node  at (6.5+1.7, 1.3, 1.3) {$\cdot $};
        \node  at (6.5+1.5, 1.5, 1.5) {$\cdot $};
        \node[blue]  at (6.5+2, 1, 1) {1};
        \node  at (6.5+2.35, 0.65, 0.65) {$\cdot $};
        \node  at (6.5+2.45, 0.55, 0.55) {$\cdot $};
        \node  at (6.5+2.55, 0.45, 0.45) {$\cdot $};
        \node  at (6.5+2.8, 0.2, 0.2) {1};

        \node[blue]  at (6.5+2, 1, 2.5) {1};
        \node[blue]  at (6.5+0.5, 1, 1) {1};
        \node[red]  at (6.5+0.5, 1, 2.5) {1};
        
        \fill [blue,fill opacity=.1] (6.5, 1, 0) -- (6.5, 1, 3) -- (6.5+3, 1, 3) -- (6.5+3, 1, 0) -- cycle;

        \fill [red,fill opacity=.1] (6.5, 1, 3) -- (6.5, 1, 1.8) -- (6.5+1.2, 1, 1.8) -- (6.5+1.2, 1, 3) -- cycle;
        
        \draw[red] (6.5+1.2,0,3) --(6.5+1.2,3,3) -- (6.5+1.2,3,1.8) -- (6.5+1.2,0,1.8) -- cycle;
        \draw[red] (6.5+0,0,3) --(6.5+1.2,0,3) -- (6.5+1.2,3,3) --(6.5+0,3,3)-- cycle;

    \end{tikzpicture}
    \]

    Define $\Psi:= \prod _{(i,j,k) \in P} \Psi _{ijk}$; then the projection of $\Psi (I_r) $ to $\RR ^r \otimes \RR^s \otimes \RR ^s$ is the tensor
    \[
    I_s + \sum _{(i,j,k) \in P} e_i \otimes E_{jk} = \sum _{l = 1}^s e_l \otimes E_{ll} +\sum _{(i,j,k) \in P} e_i \otimes E_{jk}
    .
    \]
    The set $\{E_{jk} \in \RR^s \otimes \RR^s: 1 \leq j,k \leq s\}$ is
    a spanning set as desired (these are the horizontal slices in the red rectangular cuboid seen in the picture above). 
    This shows that $Q (T) \geq s = \lfloor \sqrt{r} \rfloor = \lfloor \sqrt{Q_\CC (T)} \rfloor$.   
\end{proof}

For $n\times n \times n$ tensors with maximal complex subrank $n$, there exists a better linear bound:

\begin{prop}
\label{prop:maxcompq}
    Let $T \in \RR^n \otimes \RR^n \otimes \RR^n$ be a real tensor with maximal complex subrank $n$; then $Q_\RR (T) \geq n/2.$
\end{prop}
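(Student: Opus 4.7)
The plan is to exploit the essentially unique complex rank-$n$ decomposition of $T$ given by $Q_\CC(T)=n$, together with its interaction with complex conjugation. By (the complex analogue of) Remark~\ref{rm:maxsubrank=rank}, there exist invertible complex linear maps $\varphi_i\colon\CC^n\to\CC^n$ with $(\varphi_1\otimes\varphi_2\otimes\varphi_3)T=I_n$; equivalently, setting $u_i^{(k)}:=\varphi_i^{-1}(e_k)$,
\[
T=\sum_{k=1}^n u_1^{(k)}\otimes u_2^{(k)}\otimes u_3^{(k)},
\]
with each $\{u_i^{(k)}\}_k$ a $\CC$-basis of $\CC^n$. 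Complex-conjugating this expression yields a second rank-$n$ decomposition of the real tensor $T$; by essential uniqueness of rank-$n$ decompositions (Kruskal's criterion yields $3n\ge 2n+2$ for $n\ge 2$, and $n\le 1$ is trivial), there is an involution $\sigma\in S_n$ and scalars $\alpha_{i,k}\in\CC^\times$ with $\prod_i \alpha_{i,k}=1$ and $\overline{u_i^{(k)}}=\alpha_{i,k}\,u_i^{(\sigma(k))}$ for all $i,k$.

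Let $a$ denote the number of fixed points of $\sigma$ and $b$ the number of $2$-cycles, so $n=a+2b$. For each fixed point $k$, each $\alpha_{i,k}$ has modulus one and $\prod_i\alpha_{i,k}=1$, so phases $\lambda_i$ with $\lambda_i/\overline{\lambda_i}=\alpha_{i,k}$ and $\prod_i\lambda_i=1$ can be chosen (by a parity check on the arguments) to rescale $u_i^{(k)}$ into $\RR^n$ without altering the summand; the summand $T_k$ is then a real rank-one tensor. For each $2$-cycle $\{k,\sigma(k)\}$, the two summands are complex conjugates, so their sum is the nonzero real tensor $2\,\mathrm{Re}\bigl(u_1^{(k)}\otimes u_2^{(k)}\otimes u_3^{(k)}\bigr)$, lying in $W_1^{(k)}\otimes W_2^{(k)}\otimes W_3^{(k)}$, where $W_i^{(k)}:=\mathrm{span}_\RR\bigl(\mathrm{Re}\,u_i^{(k)},\,\mathrm{Im}\,u_i^{(k)}\bigr)\subset\RR^n$. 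Each $W_i^{(k)}$ is $2$-dimensional because $u_i^{(k)}$ and $u_i^{(\sigma(k))}\propto\overline{u_i^{(k)}}$ are $\CC$-linearly independent.

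Next, for each factor $i$, I would verify that the $a$ singleton lines $\RR u_i^{(k)}$ and the $b$ pair planes $W_i^{(k)}$ form a direct sum decomposition of $\RR^n$: given any real $v\in\RR^n$, expand $v=\sum_k c_k u_i^{(k)}$ in the $\CC$-basis and use $v=\overline{v}$ to see that $c_k\in\RR$ at each singleton and $c_{\sigma(k)}=\overline{c_k}\,\alpha_{i,k}$ at each pair, whence the corresponding contribution sits in $\RR u_i^{(k)}$ or $W_i^{(k)}$ respectively; the dimensions then add to $a+2b=n$, confirming directness. A standard block-diagonal construction now gives superadditivity of real subrank across this decomposition: if $T=\sum_j T_j$ with each $T_j$ supported in its own block in all three factors, then $Q(T)\ge\sum_j Q(T_j)$. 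Each singleton contributes $1$ (real rank-one tensor) and each pair contributes at least $1$ (any nonzero real tensor has subrank at least $1$), giving $Q(T)\ge a+b=(n+a)/2\ge n/2$. The main obstacle is the pairing step with its phase bookkeeping for the singletons; once the direct sum decomposition is in hand, the superadditivity argument is routine.
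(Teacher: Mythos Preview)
Your argument is correct and follows essentially the same route as the paper's proof: use $Q_\CC(T)=n$ together with Remark~\ref{rm:maxsubrank=rank} to obtain a complex rank-$n$ decomposition with independent factors, invoke Kruskal uniqueness to see that complex conjugation permutes the summands by an involution, and then count one real subrank unit per fixed point and one per $2$-cycle to obtain $Q(T)\ge a+b\ge n/2$. Your treatment is in fact slightly more careful than the paper's at the fixed points, where you make explicit the phase adjustment needed to render each factor real (the paper simply asserts that the fixed-point vectors may be taken in $\RR^n$); otherwise the two arguments coincide.
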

    
\begin{proof}
    Let $T \in \RR^n \otimes \RR^n \otimes \RR^n$ be a tensor with
    maximal complex subrank $n$. Then by Remark~\ref{rm:maxsubrank=rank}
    the tensor has complex rank $n$
    and can be written as $T = \sum _{i=1}^n u_i \otimes v_i \otimes
    w_i$, where the vectors $u_i, v_i, w_i$ may be complex and are
    linearly independent. As $T$ is a real tensor, we have $T =
    \overline{T}$ and hence  
    \[
    \sum _{i=1}^n u_i \otimes v_i \otimes w_i = \sum _{i=1}^n \overline{u_i} \otimes \overline{v_i} \otimes \overline{w_i}
    \]
    Using Kruskal's theorem \cite{Kruskal77}, the decomposition of $T$
    as a sum of $n$ rank-one tensors is unique up to rearrangement, so for all $i$, we have $u_i \otimes v_i \otimes w_i = \overline{u_j} \otimes \overline{v_j} \otimes \overline{w_j}$ for some $j$. Using this, after a possible reordering, we have
    \[
    T = \sum_{i = 1}^{m} u_i \otimes v_i \otimes w_i + \sum_{j = m+1}^{m + l} (u_j \otimes v_j \otimes w_j + \overline{u_j} \otimes \overline{v_j} \otimes \overline{w_j}),
    \]
    where $n = m +2l$ and $u_i, v_i, w_i \in \RR ^n $ for all $i \leq
    m$. Now apply the real linear map defined by
    \begin{align*}
        \varphi _1 (u_i ) &= e_i, &i \leq m\\
        \varphi _1 (u_{m+i} + \overline{u_{m+i} } ) &= e_{m+2i-1}, &i \geq 1\\
        \varphi _1 \left(\frac{1}{i} (u_{m+i} - \overline{u_{m+i} }) \right) &= e_{m+2i}, &i \geq 1
    \end{align*}
    and similarly for $\varphi _2$ and $\varphi _3$, we can map the tensor $T$ to a tensor with $m$ ones on the diagonal, $l$ blocks of the size $2\times 2 \times 2$ and 0 everywhere else. 
    \[
    \begin{tikzpicture}
        \draw[black] (0,3,0) --(3,3,0) -- (3,3,3) --(0,3,3)--(0,3,0) -- cycle;
        \draw[black] (0,3,3) --(3,3,3) -- (3,0,3) --(0,0,3)--(0,3,3) -- cycle;
        \draw[black] (3,3,3) --(3,3,0) -- (3,0,0) --(3,0,3)--(3,3,3) -- cycle;
        \node  at (0.2, 2.8, 2.8) {1};
        \node  at (0.5, 2.5, 2.5) {$\cdot $};
        \node  at (0.6, 2.4, 2.4) {$\cdot $};
        \node  at (0.7, 2.3, 2.3) {$\cdot $};
        \node  at (1, 2, 2) {1};

        \draw[black] (1.2,1.8,1.2) --(1.2,1.8,1.8) -- (1.8,1.8,1.8) -- (1.8,1.8,1.2) -- cycle;
        \draw[black] (1.8,1.8,1.8) --(1.8,1.2,1.8) -- (1.2,1.2,1.8) -- (1.2,1.8,1.8) -- cycle;
        \draw[black] (1.8,1.8,1.8) --(1.8,1.8,1.2) -- (1.8,1.2,1.2) --(1.8,1.2,1.8)-- cycle;

        \node  at (2.0, 1.0, 1.0) {$\cdot $};
        \node  at (2.1, 0.9, 0.9) {$\cdot $};
        \node  at (2.2, 0.8, 0.8) {$\cdot $};

        \draw[black] (2.4,0.6,0.6) --(2.4,0.6,0) -- (3,0.6,0) -- (3,0.6,0.6) -- cycle;
        \draw[black] (3,0,0.6) --(2.4,0,0.6) -- (2.4,0.6,0.6) --(3,0.6,0.6) -- cycle;

    \end{tikzpicture}
    \]
    This shows that the real subrank is $\geq m + l$, so at least $\geq n/2$.
\end{proof}

\begin{re}
    The bound in Proposition~\ref{prop:maxcompq} is tight. To see
    this, consider the $2n \times 2n \times 2n$ real tensor associated
    to the real bilinear map $f: \CC^{n} \times \CC^{n} \to \CC^{n}$
    defined by componentwise complex multiplication. This tensor has
    (real) subrank $n$ (see Theorem~\ref{cor:subrankCandH} below) and
    complex subrank $2n$; the latter statement follows using
    Remark~\ref{rm:maxsubrank=rank} and from the fact that the complex
    rank of $\CC^{1} \times \CC^{1} \to \CC^{1}$ is $2$, since $\CC
    \otimes_\RR \otimes \CC$ is isomorphic to $\CC \times \CC$ as a
    $\CC$-algebra.
\end{re}

\section{Typical subranks are consecutive}
\label{sec:consecutive}

In \cite{BBO18}, the authors showed that any rank between the minimal
and maximal typical rank is also typical. Their argument uses that
sets of tensor with a given typical rank are semialgebraic and given a
tensor, adding a rank-one tensor increases the rank by at most one. To
use their proof idea for subrank, we dually need that the subrank does
not decrease by more than one when we add a rank-one tensor.

The following argument works for all order $d\geq 2$ tensors over any field, but for consistency, we only state it for real $n_1 \times n_2 \times n_3 $ tensors.
\begin{lem}
\label{lm:addingsimpletensor}
    Let $T\in \RR^{n_1} \otimes \RR^ {n_2} \otimes \RR ^{n_3}$ be a tensor and $v$ a rank-one tensor, then $Q(T+v) \geq Q(T)-1.$
\end{lem}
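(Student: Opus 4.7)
The plan is to post-compose the linear maps witnessing $Q(T)=r$ with a carefully chosen triple of maps from $\RR^r$ to $\RR^{r-1}$. Let $r:=Q(T)$; the claim is vacuous for $r\leq 1$, so assume $r\geq 2$ and pick $\varphi_i\colon \RR^{n_i}\to\RR^r$ with $(\varphi_1\otimes\varphi_2\otimes\varphi_3)T = I_r$. Writing $v = a\otimes b\otimes c$ and setting $a' := \varphi_1(a)$, $b' := \varphi_2(b)$, $c' := \varphi_3(c)$, the image of $T+v$ under the tensor product of the $\varphi_i$ is $I_r + a'\otimes b'\otimes c'$. It therefore suffices to produce $\psi_i\colon \RR^r \to \RR^{r-1}$ with $(\psi_1\otimes\psi_2\otimes\psi_3)(I_r + a'\otimes b'\otimes c') = I_{r-1}$, for then $\psi_i\circ\varphi_i$ witness $Q(T+v)\geq r-1$.

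If one of $a',b',c'$ is zero, the rank-one perturbation vanishes and any coordinate projection $\RR^r\to\RR^{r-1}$ applied in all three directions does the job. Otherwise we may assume by symmetry that $a'_k\neq 0$ for some index $k$. Let $\sigma\colon [r]\setminus\{k\}\to [r-1]$ be the order-preserving bijection and set $u := \sum_{j\neq k} a'_j f_{\sigma(j)} \in \RR^{r-1}$, where $f_1,\ldots,f_{r-1}$ is the standard basis. Define $\psi_i(e_j) = f_{\sigma(j)}$ for $j\neq k$, and $\psi_1(e_k) = -u/a'_k$, $\psi_2(e_k) = \psi_3(e_k) = 0$. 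A short computation gives $\psi_1(a') = u + a'_k\psi_1(e_k) = 0$, so the rank-one perturbation is annihilated; meanwhile the $k$-th summand of $(\psi_1\otimes\psi_2\otimes\psi_3)(I_r)$ vanishes because $\psi_2(e_k)=0$, while the remaining $r-1$ summands assemble into $I_{r-1}$. This yields the required identity.

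The main obstacle is that the naive coordinate projection $\pi_k^{\otimes 3}$ sends $I_r + a'\otimes b'\otimes c'$ to $I_{r-1}$ plus a leftover rank-one tensor, and that residual may in principle reduce the subrank. The key is to use the $(r-1)$-dimensional freedom in choosing $\psi_1(e_k)$ to push $a'$ into $\ker\psi_1$, while simultaneously setting $\psi_2(e_k) = \psi_3(e_k) = 0$ so that this twist does not contribute any extra summand to the image of $I_r$. Once this asymmetric choice is made, the computation is a one-liner.
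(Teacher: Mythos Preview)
Your proof is correct and follows essentially the same approach as the paper: reduce to showing $Q(I_r + a'\otimes b'\otimes c')\geq r-1$, annihilate the rank-one perturbation by arranging $a'\in\ker\psi_1$, and drop one diagonal summand by sending the corresponding basis vector to zero in another factor. The paper phrases the first step as passing to the quotient $\RR^r/\langle a'\rangle$ and then selects the index $j$ for which $\pi_1(e_j)$ lies in the span of the others, which is equivalent to your choice of $k$ with $a'_k\neq 0$; your version is slightly more explicit in that it writes down the maps to $\RR^{r-1}$ directly rather than appealing to a final ``can be linearly mapped to $I_{r-1}$'' step.
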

\begin{proof}
    Let $T$ with $Q(T) \geq r$. Then there exists a linear map
    $\varphi := \varphi _1 \otimes \varphi _2 \otimes \varphi _3$,
    where $\varphi _i : \RR ^{n_i} \to \RR ^r$, s.t. $\varphi (T) =
    I_r$. Let $v = v_1 \otimes v_2 \otimes v_3$ be a rank-one tensor; we have 
    \[
    \varphi (T + v) = \varphi (T) + \varphi (v) = I_r + \varphi _1 (v_1) \otimes \varphi_2 (v_2) \otimes \varphi _3(v_3).
    \]
    If $\varphi (v)=0$, then we are done, so assume that $\varphi _i
    (v_i) \neq 0$ for all $i=1,2,3$.  Define the projection $\pi _1 :
    \R ^r \rightarrow \R ^r / \langle \varphi _1(v_1) \rangle$. Then
    \[
    (\pi _1 \otimes \id \otimes \id) (I_r + \varphi _1 (v_1) \otimes \varphi _2(v_2) \otimes \varphi _3(v_3)) = \sum _{i=1}^r \pi _1(e_i) \otimes e_i \otimes e_i.
    \]
    The vectors $\pi _1(e_1) , \ldots , \pi _1 (e_r) $ span $\R ^r / \langle \varphi _1(v_1) \rangle$ and are linearly dependent, so there exists $j$ such that $\pi _1(e_1) , \ldots , \pi _1(e_{j-1}) ,  \pi _1 (e_{j+1}), \ldots,\pi _1 (e_r)$ is linearly independent. Define the projection $ \pi _2 : \R^r \rightarrow \R^r /\langle e_j \rangle$ and define
    \[
    (\id \otimes \pi _2 \otimes \id) \left( \sum _{i=1}^r \pi _1(e_i) \otimes e_i \otimes e_i \right) =  \sum _{i=1, i \neq j}^r \pi _1(e_i) \otimes (e_i +\langle e_j \rangle)  \otimes e_i,
    \]
    which is a tensor with linearly independent vectors, and thus can be linearly mapped to $I_{r-1}.$ This shows that $Q(T + v) \geq r- 1.$
\end{proof}

Next, we want to show that the set of tensors with subrank at least $r$, denoted by $\mathcal{C}_r:= \{T \in \RR ^{n_1} \otimes \RR^{n_2} \otimes \RR^{n_3} \mid Q(T) \geq r\}$, is semialgebraic and that the generic subrank is the largest typical one. We recall the following construction from \cite{DMZ24}. On the set of tensors with nonzero entries on the main diagonal up to position $(r,r,r)$ and zeros everywhere else
\[
    X_r = \{ T \in \RR ^{n_1} \otimes \RR^{n_2} \otimes \RR^{n_3}  \mid T_{ijk} = 0 \text{ for } (i,j,k) \in [r]^3 \setminus \{ (i,i,i) \mid i \in [r]^3\} \text{ and } T_{iii} \neq 0 \text{ for } i \in [r] \},
\]
we define the linear transformation map
    \begin{align*}
        \psi _r : \GL _{n_1} \times \GL _{n_2} \times \GL_{n_3} \times X_r & \to \RR ^{n_1} \otimes \RR^{n_2} \otimes \RR^{n_3}\\
        (A,B,C,T) & \mapsto (A\otimes B \otimes C ) T.
    \end{align*}

\begin{lem}[{\cite[Lemma~2.2]{DMZ24}}]
    The image of $\psi _r$ equals $\mathcal{C}_r$.
\end{lem}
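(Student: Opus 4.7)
The plan is to prove the two inclusions $\mathrm{Im}(\psi_r)\subseteq \mathcal{C}_r$ and $\mathcal{C}_r\subseteq \mathrm{Im}(\psi_r)$ separately, in each case by writing down the relevant linear maps explicitly. In hindsight the statement is really just a reformulation of what having subrank at least $r$ means: a tensor has $Q(T)\geq r$ if and only if, after an invertible linear change of coordinates in each mode, its $[r]^3$-subtensor is diagonal with nonzero diagonal entries (with no constraint on the rest of the tensor).

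For $\mathrm{Im}(\psi_r)\subseteq\mathcal{C}_r$, I would start from a tensor $T'=(A\otimes B\otimes C)T$ with $A,B,C$ invertible and $T\in X_r$, so that the $[r]^3$-subtensor of $T$ equals $\sum_{i=1}^{r}\lambda_i\, e_i\otimes e_i\otimes e_i$ with all $\lambda_i\ne 0$. Let $\pi_j:\RR^{n_j}\to\RR^r$ denote the projection onto the first $r$ coordinates and set $D=\mathrm{diag}(\lambda_1^{-1},\ldots,\lambda_r^{-1})$. Then the composed maps $\varphi_1=D\pi_1 A^{-1}$, $\varphi_2=\pi_2 B^{-1}$, $\varphi_3=\pi_3 C^{-1}$ satisfy $(\varphi_1\otimes\varphi_2\otimes\varphi_3)T'=I_r$, witnessing $Q(T')\geq r$; entries of $T$ outside the $[r]^3$ corner are killed by the projections and do not enter this calculation.

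For the reverse inclusion, given $T'\in\mathcal{C}_r$, I would fix $\varphi_i:\RR^{n_i}\to\RR^r$ with $(\varphi_1\otimes\varphi_2\otimes\varphi_3)T'=I_r$. The key step is to observe that each $\varphi_i$ must be surjective: the mode-$i$ flattening of $I_r$ has image all of $\RR^r$, and this image is contained in the image of $\varphi_i$, which forces $\varphi_i$ to have full row rank $r$. One can then extend each $\varphi_i$ to an invertible matrix $\Phi_i\in\GL_{n_i}$ whose top $r$ rows equal those of $\varphi_i$. Setting $T:=(\Phi_1\otimes\Phi_2\otimes\Phi_3)T'$, the $[r]^3$-subtensor of $T$ equals $I_r$ by construction, so $T_{iii}=1\ne 0$ for $i\in[r]$ and the remaining $[r]^3$-entries vanish; hence $T\in X_r$, and with $A_i:=\Phi_i^{-1}$ one has $T'=\psi_r(A_1,A_2,A_3,T)$.

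The main (and essentially only) technical point is the surjectivity of the subrank-realizing $\varphi_i$; once that is in hand, both directions reduce to picking invertible completions and reading off the matrices. A useful bookkeeping remark is that the definition of $X_r$ imposes no constraint whatsoever on entries outside $[r]^3$, so no cleanup of those entries is ever required, which is what makes the argument essentially a one-step construction in each direction.
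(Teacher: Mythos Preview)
Your proof is correct. The paper does not prove this lemma itself; it merely cites \cite[Lemma~2.2]{DMZ24} without argument, so there is no paper proof to compare against. Your two-inclusion approach---with the key observation that the subrank-realizing maps $\varphi_i$ must be surjective (since the image of the mode-$i$ flattening of $I_r$ is all of $\RR^r$ and factors through $\varphi_i$) and can therefore be completed to invertible matrices---is the natural and standard argument.
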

Using this, we can prove the following two facts.
\begin{cor}
        The set of tensors with subrank $r$, $\{T \in \RR ^{n_1} \otimes \RR^{n_2} \otimes \RR^{n_3} \mid Q(T) = r\}$, is semi-algebraic.
\end{cor}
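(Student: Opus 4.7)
The plan is to apply the Tarski--Seidenberg theorem, which says that the image of a semi-algebraic set under a polynomial map is semi-algebraic, together with the fact that finite Boolean combinations of semi-algebraic sets remain semi-algebraic.

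First I would check that the domain of $\psi_r$ is semi-algebraic. Each factor $\GL_{n_i}$ is semi-algebraic, being the complement of the hypersurface $\{\det = 0\}$ in the Euclidean space of $n_i \times n_i$ matrices. The set $X_r$ is cut out by the polynomial equalities $T_{ijk}=0$ for $(i,j,k) \in [r]^3 \setminus \{(i,i,i) \mid i \in [r]\}$ together with the polynomial inequalities $T_{iii} \neq 0$ for $i \in [r]$, so it is semi-algebraic too. The product is therefore semi-algebraic.

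Next, I would note that $\psi_r$ is a polynomial map in the entries of $(A,B,C,T)$, since its output is the tensor $(A \otimes B \otimes C)T$, whose coordinates are trilinear in $A,B,C$ and linear in $T$. By the lemma from \cite{DMZ24} just quoted, $\mathcal{C}_r = \mathrm{image}(\psi_r)$, so Tarski--Seidenberg gives that $\mathcal{C}_r$ is semi-algebraic for every $r$.

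Finally, the identity $\{T \mid Q(T) = r\} = \mathcal{C}_r \setminus \mathcal{C}_{r+1}$ exhibits the set in the corollary as a difference of two semi-algebraic sets, hence itself semi-algebraic. There is no serious obstacle here: the work has all been done by the preceding lemma, and the only point to double-check is that the domain of $\psi_r$ really is semi-algebraic (not merely constructible), which is immediate from its defining equalities and inequalities over $\RR$.
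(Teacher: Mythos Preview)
Your proof is correct and is precisely the argument the paper intends: the corollary is stated without proof immediately after the lemma identifying $\mathcal{C}_r$ with the image of the polynomial map $\psi_r$, and your application of Tarski--Seidenberg together with $\{Q(T)=r\}=\mathcal{C}_r\setminus\mathcal{C}_{r+1}$ fills in exactly the omitted details.
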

\begin{lem}
    The generic subrank in $\CC ^{n_1} \otimes \CC^{n_2} \otimes \CC^{n_3}  $ is the largest typical subrank in $\RR ^{n_1} \otimes \RR^{n_2} \otimes \RR^{n_3}  $.
\end{lem}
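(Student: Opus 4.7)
The plan is to establish two halves: (a) any typical real subrank is at most the generic complex subrank $g$; and (b) $g$ itself is a typical real subrank. Together these identify the largest typical real subrank with $g$. Both halves exploit the map $\psi_r$ just introduced.

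For (a), fix $r > g$. By definition of $g$, the constructible set $\mathcal{C}_r^{\CC}:=\{T \in \CC^{n_1}\otimes\CC^{n_2}\otimes\CC^{n_3}: Q_\CC(T) \geq r\}$ is not Zariski-dense, hence lies in a proper Zariski-closed subvariety $V \subsetneq \CC^{n_1}\otimes\CC^{n_2}\otimes\CC^{n_3}$. For every real tensor $T$ the trivial inequality $Q(T) \leq Q_\CC(T)$ gives $\mathcal{C}_r \subseteq V \cap (\RR^{n_1}\otimes\RR^{n_2}\otimes\RR^{n_3})$. Since $\RR^{n_1}\otimes\RR^{n_2}\otimes\RR^{n_3}$ is Zariski-dense in $\CC^{n_1}\otimes\CC^{n_2}\otimes\CC^{n_3}$, this intersection is a proper real algebraic subset (cut out by the real and imaginary parts of the equations defining $V$), which has empty Euclidean interior. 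Hence $r$ is not typical.

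For (b), I would work with the real map $\psi_g$ (same formula as the complex one), whose image is $\mathcal{C}_g$ by the real analogue of \cite[Lemma~2.2]{DMZ24}. Its complexification $\psi_g^{\CC}$ has Zariski-dense image by the choice of $g$, so the complex differential $d\psi_g$ is surjective at some point of the complex domain, and the locus of surjectivity is Zariski-open in that irreducible domain. Because the real points are Zariski-dense in each factor $\GL_{n_i}$ and in $X_g$, this Zariski-open surjectivity locus contains a real point $p_0$. At $p_0$ the complex differential of $\psi_g$ and its real Jacobian have the same rank, so the real Jacobian is surjective. The submersion theorem then supplies a Euclidean-open neighborhood $U$ of $\psi_g(p_0)$ in $\RR^{n_1}\otimes\RR^{n_2}\otimes\RR^{n_3}$ contained in $\mathcal{C}_g$. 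By part (a), $\mathcal{C}_{g+1}$ has empty Euclidean interior, so $U\setminus \mathcal{C}_{g+1}$ is a nonempty Euclidean-open set of tensors of real subrank exactly $g$, witnessing that $g$ is typical.

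The crux is the transfer from a generic statement over $\CC$ to a Euclidean-open statement over $\RR$. It rests on two pieces that fit together cleanly: Zariski-density of real points in each $\GL_{n_i}$ and in the quasi-affine $X_g$, which forces the Zariski-open surjectivity locus of $d\psi_g$ to meet the real domain; and the identification of real Jacobian with complex differential at a real point for a polynomial with real coefficients. Once these are in hand, the real submersion theorem produces the open neighborhood, and (a) prevents it from being absorbed into the higher-subrank locus.
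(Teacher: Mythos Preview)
Your proof is correct and follows essentially the same route as the paper's: both use dominance of the complex map $\psi_g$ to find a Zariski-open locus of surjective differentials, invoke Zariski-density of the real domain to locate a real submersion point, and then apply the implicit function/submersion theorem to obtain a Euclidean-open set in $\mathcal{C}_g$, while bounding above via the fact that $\mathcal{C}_{g+1}$ lies in a proper subvariety. One tiny technical remark: in part (b) you want $U\setminus\mathcal{C}_{g+1}$ to be open, but $\mathcal{C}_{g+1}$ need not be closed; use instead $U\setminus V(\RR)$ with $V$ the proper Zariski-closed set from your part (a), which gives the desired nonempty open set of subrank exactly $g$.
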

\begin{proof}
    Let $r$ be the generic subrank; then the map $ \psi _r (\CC) : \GL _{n_1} (\CC) \times \GL _{n_2} (\CC) \times \GL_{n_3} (\CC) \times X_r (\CC) \to \CC^{n_1} \otimes \CC^{n_2} \otimes \CC^{n_3}$ is dominant. Here, we write 
    \[
    X_r (\CC) = \left\{ T \in \CC ^{n_1} \otimes \CC^{n_2} \otimes \CC^{n_3}  \mid T_{ijk} = 0 \text{ for } (i,j,k) \in [r]^3 \setminus \{ (i,i,i) \mid i \in [r]^3\} \text{ and } T_{iii} \neq 0 \text{ for } i \in [r] \right\}
\]
for the complex extension of $X_r$. Hence there is a dense open subset on which the derivative of the map is surjective. The real points $\GL _{n_1} \times \GL _{n_2} \times \GL_{n_3} \times X_r $ are Zariski dense in $ \GL _{n_1} (\CC) \times \GL _{n_2} (\CC) \times \GL_{n_3} (\CC) \times X_r (\CC)$, so there must exist a real point where the differential is surjective. Using the implicit function theorem, this shows that there exists a nonempty open set in the Euclidean topology of $\RR ^{n_1} \otimes \RR^{n_2} \otimes \RR^{n_3}  $ where the subrank equals $r$.

The set of tensors with subrank strictly greater than the generic subrank are contained in an algebraic variety of dimension at most $n_1 n_2 n_3-1$ and so cannot contain a real open subset with dimension $n_1n_2n_3$. Therefore typical subranks cannot exceed the generic subrank.
\end{proof}

We are now ready to prove Theorem~\ref{thm:consecutive}, which shows that all integers between the generic subrank and the minimal typical subrank are also typical. The proof uses the same idea as in \cite{BBO18}. We also use the following lemma from their work. Here, $\inter S$ refers to the interior of $S$ and $\clos S$ the closure of $S$ in the Euclidean topology.
\begin{lem}[{\cite[Lemma~2.1]{BBO18}}]
\label{lm:intcl}
    Let $S$  be a semialgebraic set in $\RR ^k$. Then $\inter ( \clos S) \subseteq \clos (\inter S)$.
\end{lem}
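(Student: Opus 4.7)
The plan is to pick an arbitrary point $x\in\inter(\clos S)$ and exhibit points of $\inter S$ in every neighborhood of $x$; the whole argument reduces to one nontrivial input from semialgebraic geometry, namely that a $k$-dimensional semialgebraic subset of $\R^k$ has nonempty interior.

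First I would use the following purely topological observation: for any subset $T\subseteq\R^k$ and any open set $U\subseteq\R^k$,
\[
\clos(T)\cap U\;\subseteq\;\clos(T\cap U),
\]
since a neighborhood basis of a point $y\in U$ can be shrunk to lie in $U$. Applied to $T=S$, this will let me transfer information about $\clos S$ near $x$ into information about $S$ itself near $x$.

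Second, choose $r>0$ with $B:=B(x,r)\subseteq\clos S$. For any $0<r'\le r$, set $B':=B(x,r')$. Then $B'\subseteq\clos S\cap B'\subseteq\clos(S\cap B')$ by the observation above, so $S\cap B'$ is a semialgebraic set whose closure has nonempty interior in $\R^k$, hence dimension $k$. Because the dimension of a semialgebraic set coincides with the dimension of its closure (Bochnak--Coste--Roy), the semialgebraic set $S\cap B'$ itself has dimension $k$.

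Third, invoke the structural fact that a semialgebraic subset of $\R^k$ of full dimension $k$ has nonempty Euclidean interior (a direct consequence of the cylindrical algebraic decomposition, or equivalently of the cell/stratification decomposition of semialgebraic sets). Thus $\inter(S\cap B')\ne\emptyset$, and any point in this interior lies in $\inter S\cap B'$. Since $r'>0$ was arbitrary, every neighborhood of $x$ meets $\inter S$, giving $x\in\clos(\inter S)$, which is what we wanted.

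The only place where the semialgebraic hypothesis is genuinely used is in the passage from "$S\cap B'$ has dimension $k$" to "$\inter(S\cap B')\ne\emptyset$", and this is the main obstacle: the conclusion is manifestly false without a tameness assumption (consider a dense $G_\delta$ of measure zero), so the entire weight of the lemma rests on importing this dimension-versus-interior dichotomy from semialgebraic geometry.
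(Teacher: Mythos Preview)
Your proof is correct. The paper does not supply its own argument for this lemma; it is quoted verbatim from \cite[Lemma~2.1]{BBO18} and used as a black box, so there is nothing in the present paper to compare against. Your route---reducing to the fact that a semialgebraic subset of $\RR^k$ of dimension $k$ has nonempty interior, via the equality $\dim(S\cap B')=\dim\clos(S\cap B')$---is the standard one and matches the spirit of the original reference.
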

\begin{proof}[Proof of Theorem~\ref{thm:consecutive}]
 Assume that $r$ is a typical subrank, but $r-1 $ is not. Therefore there is no nonempty open subset of tensors with subrank $r-1$. As above, let $\mathcal{C}_r = \{T \in \RR^{n_1} \otimes \RR^{n_2} \otimes \RR^{n_3} \mid Q(T) \geq r\}$ be the set of tensors with subrank at least $ r$. The fact that $r-1$ is not a typical subrank implies that $\inter (\mathcal{C}_{r-1} ) \setminus \mathcal{C}_r$ 
 has empty interior, hence $\inter \mathcal{C}_{r-1} \subseteq \clos \mathcal{C}_{r}$ and so $ \inter \mathcal{C}_{r-1} \subseteq \inter (\clos \mathcal{C}_{r})$. By Lemma~\ref{lm:intcl} $\inter (\clos \mathcal{C}_{r}) \subseteq \clos (\inter \mathcal{C}_r)$ and so
\begin{equation}
\label{eq:inlucsionC_r-1}
     \inter \mathcal{C}_{r-1} \subseteq \inter (\clos \mathcal{C}_{r}) \subseteq \clos (\inter \mathcal{C}_r) .
\end{equation}
    Given a tensor $T \in \clos (\inter \mathcal{C}_r)$, there exists
    a sequence of tensors $\{T_i\} \subset \inter \mathcal{C}_r$
    converging to $T$ in the Euclidean topology. Let $v$ be a rank-one
    tensor; then by Lemma~\ref{lm:addingsimpletensor} $Q(T_i + v) \geq
    r-1$, and as $T_i \in \inter \mathcal{C}_{r}$, it follows that
    $T_i + v \in \inter \mathcal{C}_{r-1}$. Using the inclusion
    (\ref{eq:inlucsionC_r-1}), we have $T +v \in \clos (\inter
    \mathcal{C}_r)$. Inductively, we have for all $k \in \NN$ and for
    all rank-one tensors $v_i$
    \[
    T + \sum _{i=1}^k v_i \in \clos (\inter \mathcal{C}_r).
    \]
    This shows that all tensors are contained in the set $ \clos (\inter \mathcal{C}_r)$, so $ \clos (\inter \mathcal{C}_r) = \RR^{n_1} \otimes \RR^{n_2} \otimes \RR^{n_3} . $

This implies that $r$ is the smallest typical subrank, as there cannot exist a nonempty open subset with tensors of subrank $<r$ if $ \clos (\inter \mathcal{C}_r) = \RR^{n_1} \otimes \RR^{n_2} \otimes \RR^{n_3}$; hence, all typical subranks are consecutive.
\end{proof}

\section{Subrank for specific order-three formats}
\label{sec:subrankforspecificorder-threeformats}

In this section, we consider different examples of spaces of real $n_1 \times n_2 \times n_3$ tensors and determine their typical subranks. We recall that the generic subrank of tensors in $\CC ^{n_1} \otimes \CC ^{n_2} \otimes \CC ^{n_3} $ is the largest typical subrank. As recently proved by \cite{PSS-exactvaluesgenericsubrank}, the generic subrank is $\min \{\lfloor \sqrt{n_1 + n_2 + n_3 -2 }\rfloor, n_1, n_2, n_3 \}$.

The following lemma, which shows that the typical subranks behave well when scaling $n_1, n_2, n_3$, will be key to the subsequent arguments:

\begin{lem}
\label{lem:scaling}
    Let $m_1 \leq n_1, m_2 \leq n_2, m_3 \leq n_3$ be natural numbers. Then the minimal typical subrank of $n_1 \times n_2 \times n_3$ tensors is at least the minimal typical subrank of $m_1 \times m_2 \times m_3$ tensors.
\end{lem}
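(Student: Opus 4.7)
The plan is to transport an open set witnessing the minimal typical subrank in the large format down to the small format via a coordinate projection, and then extract a typical subrank from the resulting open set.  The underlying monotonicity I will use is that for any linear maps $\alpha_i\colon\RR^{n_i}\to\RR^{m_i}$, setting $\alpha:=\alpha_1\otimes\alpha_2\otimes\alpha_3$, one has $Q(\alpha(T))\leq Q(T)$; this is immediate from the definition, since a witness triple $\psi_i\colon\RR^{m_i}\to\RR^r$ for $Q(\alpha(T))\geq r$ gives, via $\psi_i\circ\alpha_i$, a witness triple for $Q(T)\geq r$.

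Let $s$ denote the minimal typical subrank of $n_1\times n_2\times n_3$ tensors; by definition there is a nonempty Euclidean-open subset $V\subseteq\RR^{n_1}\otimes\RR^{n_2}\otimes\RR^{n_3}$ on which $Q$ equals $s$.  For each $i$ let $\pi_i\colon\RR^{n_i}\to\RR^{m_i}$ be the projection onto the first $m_i$ coordinates, and set $\pi:=\pi_1\otimes\pi_2\otimes\pi_3$.  Since $\pi$ is a surjective linear map of finite-dimensional real vector spaces, it is an open map, so $\pi(V)$ is a nonempty open subset of $\RR^{m_1}\otimes\RR^{m_2}\otimes\RR^{m_3}$; by the monotonicity above, $Q(S)\leq s$ for every $S\in\pi(V)$.

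To conclude, I would partition
\[
\pi(V)\;=\;\bigsqcup_{r=0}^{s}\bigl\{S\in\pi(V):Q(S)=r\bigr\},
\]
where each piece is semialgebraic by the corollary in Section~\ref{sec:consecutive} stating that $\{Q=r\}$ is semialgebraic.  Since $\pi(V)$ has nonempty interior in $\RR^{m_1}\otimes\RR^{m_2}\otimes\RR^{m_3}$, and a finite union of semialgebraic sets of strictly smaller dimension than the ambient space has empty interior, at least one of these pieces, indexed by some $r^*\leq s$, must itself have nonempty interior.  Then $r^*$ is a typical subrank of the $m_1\times m_2\times m_3$ format, and the minimal typical subrank in that format is at most $r^*\leq s$, which is the asserted inequality.

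The only step requiring anything beyond unwinding the definitions is the last one, where semialgebraicity is used to rule out the possibility that finitely many pieces of strictly smaller dimension could exhaust an open set; everything else is either immediate or already available earlier in the paper, so I expect no genuine obstacle.
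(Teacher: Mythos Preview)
Your argument is correct and follows essentially the same route as the paper: project an open set of minimal typical subrank via coordinate projections, use the monotonicity $Q(\pi(T))\leq Q(T)$, and then extract from the image an open subset of constant subrank $\leq s$. The only difference is that you spell out the justification for this last extraction step via the semialgebraicity of the subrank-$r$ loci (from Section~\ref{sec:consecutive}), whereas the paper asserts it without comment.
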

\begin{proof}
    Let $r$ be the minimal typical subrank in $\RR^{n_1}\otimes \RR^{n_2}\times \RR^{n_3}$. Then there exists a full-dimensional open subset $U \subset \RR ^{n_1} \otimes \RR ^{n_2} \otimes \RR ^{n_3} $ such that for all $T \in U$, $Q(T) = r$. Let $\pi_i : \RR^{n_i} \to \RR^{m_i}$ be the coordinate projection onto the first $m_i$ coordinates for $i=1,2,3$. Then $(\pi_1 \otimes \pi_2 \otimes \pi_3)U \subseteq \RR ^{m_1} \otimes \RR ^{m_2} \otimes \RR ^{m_3} $ is also open and of full dimension, and $Q(S) \leq r$ for all $S \in \pi(U)$. Hence, there exists an $s \leq r$ and a full-dimensional open subset $V \subseteq \pi(U)$ such that $Q(S) = s$ for all $S \in V$. It follows that the minimal typical subrank of $m_1 \times m_2 \times m_3$ tensors is at most $r$.
\end{proof}

Combining the above lemma with the upper bound of $\min \{ \lfloor \sqrt{n_1 + n_2 + n_3 -2 }\rfloor,  n_1, n_2 , n_3 \}$, we can deduce the typical subranks of some tensors formats when we know the typical subranks of smaller formats.

\subsection{Typical subranks of \boldmath{$2\times m\times n$} tensors}
\label{sec:2xnxm}

We begin this section with the smallest nontrivial format of order-three tensors. The typical ranks of $2 \times 2 \times 2$ tensors are 2 or 3, corresponding to $\Delta(T) > 0$ and $\Delta(T) < 0$, respectively, where $\Delta(T)$ is the Cayley hyperdeterminant \cite[Propositions 5.9, 5.10]{Silva-Lim08}. Furthermore, generically there exists a one-to-one correspondence between rank and subrank; for generic tensors, we have that $\rank (T) = 2 $ precisely when $Q(T) = 2$ (see Remark~\ref{rm:maxsubrank=rank}), which implies that $\rank (T) = 3$ precisely when $Q(T) = 1$. 
\begin{prop}
    For $2\times 2 \times 2$ tensors, the typical subranks are $1$ and $2$. For a tensor $T = (t_{ijk})_{ijk}$ with $t_{ijk}$ i.i.d.\ Gaussian random variables, $\mathbf{P}(Q(T) = 2) = \frac{\pi}{4}$. 
\end{prop}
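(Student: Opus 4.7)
The plan is to combine Remark~\ref{rm:maxsubrank=rank} with the classical rank classification for real $2\times 2\times 2$ tensors via the Cayley hyperdeterminant $\Delta$, and then to compute the Gaussian probability of the event $\{\Delta(T)>0\}$. Because each factor has dimension $2$, we always have $Q(T)\le 2$, and Remark~\ref{rm:maxsubrank=rank} yields $Q(T)=2$ iff $T$ is concise with $R(T)=2$; the non-concise locus has positive codimension and occurs with probability zero. By \cite[Propositions~5.9 and 5.10]{Silva-Lim08}, a concise real $2\times 2\times 2$ tensor has $R(T)=2$ on the open set $\{\Delta(T)>0\}$ and $R(T)=3$ on the open set $\{\Delta(T)<0\}$. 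On the latter, $Q(T)=2$ would force $R(T)=2$, contradicting $R(T)=3$; since $T$ is nonzero, $Q(T)\ge 1$, so $Q(T)=1$. Hence the typical subranks are exactly $1$ and $2$, and $\mathbf{P}(Q(T)=2)=\mathbf{P}(\Delta(T)>0)$.

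Next I would make this probability concrete. Writing $T$ in terms of its two frontal slices as an ordered pair $(A,B)$ of independent $2\times 2$ matrices with i.i.d.\ standard Gaussian entries, the identity
\[
\det(\lambda A+\mu B)=\lambda^{2}\det A+\lambda\mu\operatorname{tr}(A^{\mathrm{adj}}B)+\mu^{2}\det B
\]
shows that $\Delta(T)$ equals, up to a positive factor and an overall sign, the discriminant $\operatorname{tr}(A^{\mathrm{adj}}B)^{2}-4\det A\det B$ of this binary quadratic. Therefore
\[
\mathbf{P}(Q(T)=2)=\mathbf{P}\bigl(\operatorname{tr}(A^{\mathrm{adj}}B)^{2}>4\det A\det B\bigr),
\]
equivalently the probability that the matrix $A^{-1}B$ (well-defined almost surely) has two real eigenvalues.

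The main obstacle is the closed-form evaluation of this integral. I would exploit the $O(2)\times O(2)$-invariance of the law of $(A,B)$ under $(A,B)\mapsto(U^{\top}AV,\,U^{\top}BV)$ by using the SVD of $A$: writing $A=U\Sigma V^{\top}$ and replacing $(A,B)$ by $(\Sigma,\,U^{\top}BV)$ keeps $B$ Gaussian and independent of $\Sigma$, and a direct computation reduces the discriminant to the compact form
\[
(\sigma_{2}e-\sigma_{1}h)^{2}+4\sigma_{1}\sigma_{2}fg,
\]
where $(e,f,g,h)$ are the entries of the new $B$ and $\sigma_{1},\sigma_{2}\ge 0$ are the singular values of $A$, with their known joint density. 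Introducing polar coordinates in the $(e,h)$- and $(f,g)$-planes converts the positivity condition into a semi-algebraic condition in two angular and two radial variables; I expect the angular integrals to contribute a factor of $\pi$ and the radial integrals against the chi-type singular-value density to collapse to an elementary constant, producing $\pi/4$. The delicate step is keeping the Jacobians and the geometry of $\{(\sigma_{2}e-\sigma_{1}h)^{2}+4\sigma_{1}\sigma_{2}fg>0\}$ under tight enough control to yield an exact closed form rather than a numerical approximation.
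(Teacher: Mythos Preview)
Your treatment of the first claim---that the typical subranks are $1$ and $2$---is correct and essentially matches the paper: both reduce to the equivalence $Q(T)=2 \Leftrightarrow R(T)=2$ for concise $2\times 2\times 2$ tensors via Remark~\ref{rm:maxsubrank=rank}, and then invoke the classification by the sign of the Cayley hyperdeterminant from \cite{Silva-Lim08} to see that both signs occur on Euclidean-open sets.

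For the probability $\pi/4$, the paper does \emph{not} carry out the integral: it simply observes $\mathbf{P}(Q(T)=2)=\mathbf{P}(R(T)=2)$ and cites \cite{Bergqvist2013} for the value $\pi/4$. Your setup---identifying $\{\Delta(T)>0\}$ with positivity of the discriminant of $\det(\lambda A+\mu B)$, i.e.\ with real spectrum of $A^{-1}B$, and then exploiting orthogonal bi-invariance via the SVD of $A$---is correct and is in fact close in spirit to Bergqvist's computation. However, as written your argument stops at ``I expect the angular integrals to contribute a factor of $\pi$\ldots'': the evaluation is not completed, and the closing sentence acknowledges this. So either cite \cite{Bergqvist2013} (as the paper does) or push the integral through; in the latter case it is cleaner to condition on $A$ and compute $\mathbf{P}(A^{-1}B\text{ has real eigenvalues}\mid A)$ directly, since for fixed invertible $A$ the matrix $A^{-1}B$ is a linear image of a Gaussian and the eigenvalue condition reduces to a one-dimensional discriminant inequality whose probability can be evaluated in closed form.
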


\begin{proof}
    For $T \in \mathbb{R}^2 \otimes \mathbb{R}^2 \otimes \mathbb{R}^2$, 2 is the maximal subrank, so
    \[
    Q(T) = 2 \;\;\; \Leftrightarrow \;\;\; \rank (T) = 2
    \]
    by Remark~\ref{rm:maxsubrank=rank}. Thus, as calculated by \cite{Bergqvist2013},
    \[
    \mathbf{P}(Q(T) = 2) = \mathbf{P}(\rank (T) = 2) = \frac{\pi}{4}. \qedhere
    \]
\end{proof}

\begin{lem}\label{lm:223}
    The unique typical subrank for $2 \times 2 \times 3 $ tensors is 2.
\end{lem}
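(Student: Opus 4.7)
The plan is to show $Q(T)=2$ on a Zariski-open dense subset of $\RR^2\otimes\RR^2\otimes\RR^3$. The upper bound $Q(T)\leq\min(n_1,n_2,n_3)=2$ is immediate, and since the complex generic subrank on this format is $\lfloor\sqrt{5}\rfloor=2$, this is the largest typical subrank; by Theorem~\ref{thm:consecutive} the typical subranks form a consecutive block ending at $2$, so it suffices to rule out $1$ being typical, i.e., to show that $\{T:Q(T)\geq 2\}$ is dense.

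I would work with the three third-axis slices $T_1,T_2,T_3\in\RR^{2\times 2}$, which generically span a $3$-dimensional subspace $V\subset\RR^{2\times 2}$, and exploit the fact that $\det$ is a quadratic form of signature $(2,2)$ on $\RR^{2\times 2}$ (visible from the basis $I,\operatorname{diag}(1,-1),E_{12}+E_{21},E_{12}-E_{21}$, giving $\det=a^2+d^2-b^2-c^2$). The restriction $\det|_V$ to the $3$-dimensional $V$ cannot have signature $(3,0)$ or $(0,3)$; for $T$ in a further Zariski-open subset this restriction is non-degenerate, hence automatically indefinite, of signature $(2,1)$ or $(1,2)$.

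Sylvester's law now produces a $2$-dimensional subspace $W\subset V$ on which $\det|_W$ is non-degenerate of signature $(1,1)$: take $W=v_0^\perp$ (with respect to the polarization of $\det|_V$) for any $v_0\in V$ realizing the minority sign. Choose $p_1,p_2\in\RR^3$ so that $S_i:=\sum_k(p_i)_k T_k$ form a basis of $W$, let $\pi\colon\RR^3\to\RR^2$ be the linear map with rows $p_1,p_2$, and set $T':=(\id\otimes\id\otimes\pi)T\in\RR^2\otimes\RR^2\otimes\RR^2$. The third-axis slices of $T'$ are exactly $S_1,S_2$, so the binary form $(s_1,s_2)\mapsto\det(s_1S_1+s_2S_2)$ is $\det|_W$ in this basis. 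Its discriminant equals the Cayley hyperdeterminant $\Delta(T')$, which is positive because $\det|_W$ has signature $(1,1)$; by the characterization preceding the lemma (namely $\Delta(T')>0\Leftrightarrow\rank(T')=2$, combined with Remark~\ref{rm:maxsubrank=rank}) this forces $Q(T')=2$. Since applying a linear map in a single factor can only decrease the subrank, $Q(T)\geq Q(T')=2$.

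Both genericity conditions (linear independence of $T_1,T_2,T_3$ and non-degeneracy of $\det|_V$) are Zariski-open in $\RR^2\otimes\RR^2\otimes\RR^3$ and non-empty---for instance, $T_1=I$, $T_2=\operatorname{diag}(1,-1)$, $T_3=E_{12}+E_{21}$ gives $\det|_V(a,b,c)=a^2-b^2-c^2$---hence Zariski-dense, so their intersection is dense and open with $Q(T)=2$. The most delicate point is the signature count for $\det$: once $\det|_V$ is known to be \emph{forced} indefinite (not merely generically so), the real $2\times 2\times 3$ case avoids the obstruction that produces the second typical subrank of $2\times 2\times 2$ tensors, and the rest is routine.
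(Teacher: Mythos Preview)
Your argument is correct in outline but contains a small slip in the construction of $W$: to obtain a plane of signature $(1,1)$ inside a $3$-space of signature $(2,1)$ or $(1,2)$, you must take $v_0$ realizing the \emph{majority} sign, not the minority. If $\det|_V$ has signature $(2,1)$ and $v_0$ is negative (the minority), then $v_0^\perp$ has signature $(2,0)$, which is definite; taking $v_0$ positive instead yields the desired $(1,1)$ plane, and symmetrically in the $(1,2)$ case. With this correction (or simply by picking one positive and one negative vector from an orthogonal basis of $\det|_V$), the rest of your proof goes through.

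Your route is genuinely different from the paper's. The paper invokes the result of ten Berge--Kiers that the unique typical \emph{rank} of real $2\times 2\times 3$ tensors is $3$, writes a generic $T$ as $\sum_{i=1}^3 u_i\otimes v_i\otimes w_i$ with $w_1,w_2,w_3$ independent, and defines $\varphi_3$ by $\varphi_3(w_1)=e_1$, $\varphi_3(w_2)=e_2$, $\varphi_3(w_3)=0$, killing the third summand and leaving $I_2$. This is shorter but imports the typical-rank classification as a black box. Your signature argument is self-contained and explains structurally \emph{why} the second typical subrank of the $2\times 2\times 2$ format disappears once a third slice is added: any $3$-plane in the $(2,2)$-signature space $(\RR^{2\times 2},\det)$ is forced to be indefinite, so one can always extract a hyperbolic $2$-plane and hence a rank-$2$ (equivalently subrank-$2$) $2\times 2\times 2$ sub-tensor. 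This viewpoint meshes well with the slice-based method of Section~\ref{sec:geometricmethod}.
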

\begin{proof}
    Typical $2\times 2\times 3$-tensors have rank 3 \cite{tenBerge-Kiers99}. Fix vector spaces
    $V_1,V_2,V_3$ of dimensions $2,2$ and $3$, respectively. Let $T\in
    V_1 \otimes V_2\otimes V_3$ be a sufficiently general tensor; then we can write a tensor rank decomposition of $T$ as
    \[
    T = \sum_{i=1}^3 u_i \otimes v_i \otimes w_i
    \]
    where $u_i \in V_1$, $v_i\in V_2$ and $w_i \in V_3$. We may
    further assume that $u_1,u_2$ are linearly independent, $v_1,v_2$ are linearly independent, and $w_1,w_2,w_3$ are all linearly independent.

    We define the following linear functions $\varphi_i : V_i \to \R^2$ for $i=1,2,3$:
    \begin{align*}
        &\varphi_1(u_1) := e_1, \;\;\;\; \varphi_1(u_2) := e_2\\
        &\varphi_2(v_1) := e_1, \;\;\;\; \varphi_2(v_2) := e_2\\
        &\varphi_3(w_1) := e_1, \;\;\;\; \varphi_3(w_2) := e_2, \;\;\;\; \varphi_3(w_3) := 0,
    \end{align*}
    which shows that $Q(T) \geq 2$.
    The subrank is bounded from above by $\min_i \{\text{dim }V_i\} = 2$, so it follows that $Q(T) = 2$.  Because this is the case for all sufficiently general $2\times 2\times 3$ tensors, this is the only typical subrank.
\end{proof}

\begin{thm}
\label{thm:2xmxn}
    The unique typical subrank for $2\times m\times n$ tensors with $m,n \geq 2$ and $(m,n) \neq (2,2)$ is 2.
\end{thm}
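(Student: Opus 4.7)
The plan is to reduce this to the base case $2\times 2\times 3$ (Lemma~\ref{lm:223}) via the monotonicity established in Lemma~\ref{lem:scaling}. The only remaining work is an upper bound and a straightforward reduction; no new construction is needed.

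First, I record the trivial upper bound. For every $T \in \RR^2 \otimes \RR^m \otimes \RR^n$ the subrank is bounded by $\min_i \dim V_i = 2$, so any typical subrank of this format is at most $2$.

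Next, by permuting the last two tensor factors (which does not change the set of typical subranks) I may assume $m \leq n$. The hypothesis $(m,n) \neq (2,2)$ combined with $m,n \geq 2$ then forces $n \geq 3$. Applying Lemma~\ref{lem:scaling} with $(m_1,m_2,m_3) = (2,2,3)$ and $(n_1,n_2,n_3) = (2,m,n)$, the minimal typical subrank of $\RR^2 \otimes \RR^m \otimes \RR^n$ is at least the minimal typical subrank of $\RR^2 \otimes \RR^2 \otimes \RR^3$, which equals $2$ by Lemma~\ref{lm:223}. Combined with the upper bound, every typical subrank must equal $2$; and since the ambient space is partitioned into finitely many semialgebraic strata indexed by subrank, at least one typical subrank exists, so $2$ is the unique typical subrank.

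I do not anticipate any genuine obstacle: the hard content (existence of a $2$-dimensional unit tensor in general $2\times 2\times 3$ tensors) has already been absorbed into Lemma~\ref{lm:223}, and the monotonicity lemma carries it to all larger formats of the form $2\times m\times n$.
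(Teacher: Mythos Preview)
Your proof is correct and follows essentially the same approach as the paper: both use the dimension upper bound $Q(T)\leq 2$ together with Lemma~\ref{lem:scaling} applied to the $2\times 2\times 3$ base case (Lemma~\ref{lm:223}) to pin down $2$ as the unique typical subrank. Your version is slightly more explicit about why at least one typical subrank exists, but otherwise the arguments coincide.
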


\begin{proof}
    Without loss of generality, assume that $2 \leq m$, $3\leq n$ and
    let $T \in \RR^2 \otimes\RR^m \otimes\RR^n$ be typical. Then $Q(T)$ is bounded from above by $2 = \text{dim}(V_1)$ and using Lemma~\ref{lem:scaling} together with the fact that the minimal typical subrank of $2 \times 2 \times 3$ tensors is 2, it follows that $Q(T) = 2$.
\end{proof}

From this result, we can also conclude the typical subranks of $3 \times 3 \times 3$ and $3 \times 3 \times 4$ tensors. 
\begin{cor}
The unique typical subrank for $3 \times 3 \times 3$ and $3 \times 3 \times 4$ tensors is 2.    
\end{cor}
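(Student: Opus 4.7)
The plan is to pin down both the maximal and minimal typical subranks and observe they coincide at $2$. For the upper bound, I would invoke the result recalled in Section~\ref{sec:subrankforspecificorder-threeformats}: the generic complex subrank of $\CC^{n_1}\otimes\CC^{n_2}\otimes\CC^{n_3}$ is $\min\{\lfloor\sqrt{n_1+n_2+n_3-2}\rfloor,n_1,n_2,n_3\}$, which equals $\lfloor\sqrt{7}\rfloor=2$ for format $(3,3,3)$ and $\lfloor\sqrt{8}\rfloor=2$ for format $(3,3,4)$. Since the generic complex subrank is the largest typical real subrank (by the lemma just before Theorem~\ref{thm:consecutive}), every typical subrank in either format is at most $2$.

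For the lower bound, I would apply Lemma~\ref{lem:scaling} with $(m_1,m_2,m_3)=(2,2,3)$, which is coordinatewise dominated by both $(3,3,3)$ and $(3,3,4)$. By Lemma~\ref{lm:223} the unique typical subrank of $2\times 2\times 3$ tensors is $2$, so the minimum typical subrank in $\RR^{3}\otimes\RR^{3}\otimes\RR^{3}$ and in $\RR^{3}\otimes\RR^{3}\otimes\RR^{4}$ is at least $2$.

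Combining the two bounds, the minimal and maximal typical subranks in each of the two formats both equal $2$, and by Theorem~\ref{thm:consecutive} every typical subrank lies between them, so $2$ is the unique typical subrank. No step here is a real obstacle: the argument is essentially a bookkeeping combination of Lemma~\ref{lem:scaling}, Lemma~\ref{lm:223}, the formula for the generic complex subrank, and the identification of the generic complex subrank with the largest typical real subrank. The only point that requires minor care is to verify the arithmetic $\lfloor\sqrt{7}\rfloor=\lfloor\sqrt{8}\rfloor=2$ so that the generic complex subrank does not exceed $2$ in either format.
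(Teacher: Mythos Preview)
Your proposal is correct and follows essentially the same approach as the paper: bound the typical subranks above by the generic complex subrank via the formula $\lfloor\sqrt{n_1+n_2+n_3-2}\rfloor$, and bound them below by applying Lemma~\ref{lem:scaling} to a smaller format whose unique typical subrank is already known to be $2$. The only cosmetic difference is that the paper invokes Theorem~\ref{thm:2xmxn} for the format $(2,3,3)$, whereas you go directly to Lemma~\ref{lm:223} for the format $(2,2,3)$; both choices work equally well, and your final appeal to Theorem~\ref{thm:consecutive} is harmless but unnecessary since once the minimal and maximal typical subranks coincide there is nothing left to fill in.
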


\begin{proof}
    
    By the upper bound of \cite{PSS-exactvaluesgenericsubrank}, the
    typical subranks are in both cases at most $\lfloor \sqrt{n_1 + n_2 + n_3-2} \rfloor= 2$.
    Using Theorem~\ref{thm:2xmxn}, the typical subrank of tensors in $\RR ^2 \otimes \RR ^3 \otimes \RR^3$ is 2 and hence by Lemma~\ref{lem:scaling}, the smallest typical subranks of $3 \times 3 \times 3$ tensors is at least 2.
\end{proof}

\begin{re}
    Ten Berge and Kiers showed that the typical ranks $\RR ^2 \otimes \RR ^m \otimes \RR ^m$ are $\{m, m+1\}$ \cite{tenBerge-Kiers99}. Hence a format having multiple typical ranks does not imply the existence of multiple typical subranks.
\end{re}

\subsection{A geometric method to bound the subrank from above}
\label{sec:geometricmethod}
There is no established method to give an upper bound for the subrank of a given tensor; hence, showing that a given integer that is strictly smaller than the generic subrank is a typical rank is difficult. In the following, we give a necessary condition for a tensor to have subrank at least $r$, which we use to give upper bounds on the subrank of $3 \times 3 \times 5$ and $4 \times 4 \times 4$ tensors. We prove that in those cases, there exist multiple typical subranks.

\bigskip

Let $T \in \RR ^{n_1} \otimes \RR ^{n_2} \otimes \RR ^{n_3} $ be a
tensor and denote by $T_1, \ldots , T_{n_1} \in \RR ^{n_2} \otimes \RR
^{n_3}$ the slices of $T$ along the first axis. Assume that
the subrank of $T$ is $\geq r$. Then there exist surjective linear
maps $\pi_2:\RR^{n_3} \to \RR^r$ and $\pi_3:\RR^{n_3} \to \RR^r$
and an $r$-dimensional linear
subspace $W \subset \langle \tilde{T_1}, \ldots, \tilde{T}_{n_1}
\rangle$, where $\tilde{T_i}:=(\pi_2 \otimes \pi_3)(T_i)$, 
which up to left and right multiplication is of the form  
\[
\left\{ \begin{pmatrix}
    x_1 & 0 & 0\\
    0 & \ddots & 0 \\
    0 & 0 & x_r
\end{pmatrix} \mid x_1 , \ldots , x_r \in \RR \right\}.
\]
This is a necessary and sufficient condition for $Q(T) \geq  r$. 
\bigskip

We formulate the existence of this subspace using the Segre variety: 
for $\RR$-vector spaces $U,V$ we denote by 
\[
\Sigma_{U,V} := \{ [A] \in \mathbb{P} (U \otimes V) \mid \rank
(A) = 1 \},
\]
the projective variety corresponding to the affine cone of rank-one
tensors of order $2$. For $U=K^m$ and $V=K^n$ we also just write
$\Sigma_{m,n}$. 
By the discussion above, we can now state the following necessary
condition for subrank $\geq r$.
\begin{lem}
\label{lm:geometricupperbound}
    Let $T \in \RR ^{n_1} \otimes \RR ^{n_2} \otimes \RR ^{n_3} $ be a
    tensor with subrank at least $r$. Then there exist linear maps
    $\pi_2: \RR^{n_2} \to \RR^r$, $\pi_3: \RR^{n_3} \to \RR^r$ such
    that the slices $\tilde{T_1}, \ldots , \tilde{T}_{n_1} \in \RR^r
    \otimes \RR^r$ of $(\id \otimes \pi_2 \otimes \pi_3)T$ satisfy:
    \[
    \# (\PP \langle \tilde{T_1}, \ldots , \tilde{T}_{n_1}
    \rangle \cap \Sigma (U \times V ) ) \geq r. 
    \]
\end{lem}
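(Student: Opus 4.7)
The plan is to unwind the definition of subrank directly: the claimed geometric condition is essentially $Q(T)\geq r$ restated in terms of slice spans, so no deep tools are needed. Starting from the hypothesis $Q(T) \geq r$, I get $\RR$-linear maps $\varphi_i : \RR^{n_i} \to \RR^r$ with $(\varphi_1 \otimes \varphi_2 \otimes \varphi_3)T = I_r$. My first move is to take $\pi_2 := \varphi_2$ and $\pi_3 := \varphi_3$; the third map $\varphi_1$ will be used only as a tool inside the argument and does not appear in the conclusion.

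Writing $T = \sum_{i=1}^{n_1} e_i \otimes T_i$ with $T_i \in \RR^{n_2}\otimes\RR^{n_3}$, the first-axis slices of $T' := (\id \otimes \pi_2 \otimes \pi_3)T$ are exactly $\tilde{T}_i = (\pi_2 \otimes \pi_3)T_i$. Applying $\varphi_1$ to the first factor of $T'$ yields
\[
(\varphi_1 \otimes \id \otimes \id)T' \;=\; \sum_{i=1}^{n_1} \varphi_1(e_i) \otimes \tilde{T}_i \;=\; I_r \;=\; \sum_{j=1}^{r} e_j \otimes E_{jj}.
\]
Writing $\varphi_1$ as a matrix $(a_{ji})$ and comparing coefficients of $e_j$ gives $\sum_{i=1}^{n_1} a_{ji}\tilde{T}_i = E_{jj}$ for each $j \in [r]$. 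Hence the $r$ rank-one matrices $E_{11},\ldots,E_{rr}$ all lie in $\langle \tilde{T}_1,\ldots,\tilde{T}_{n_1}\rangle$, and their classes are $r$ pairwise distinct points of $\PP\langle \tilde{T}_1,\ldots,\tilde{T}_{n_1}\rangle \cap \Sigma_{r,r}$, which is the desired inequality.

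There is no real obstacle in this argument; the lemma is a direct geometric reformulation of the subrank condition, obtained by viewing the diagonal components of $I_r$ as rank-one summands and translating the existence of $\varphi_1$ into a relation among the slices. The content of the statement lies rather in its contrapositive use: if for every choice of $\pi_2,\pi_3$ the projective slice span fails to contain $r$ distinct rank-one points, then $Q(T) < r$. This is the form in which the lemma will be applied in Section~\ref{sec:subrankforspecificorder-threeformats} to bound the subrank of concrete small-format tensors from above.
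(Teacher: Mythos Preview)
Your proof is correct and follows essentially the same route as the paper: take $\pi_2=\varphi_2$, $\pi_3=\varphi_3$, and use $\varphi_1$ to exhibit the diagonal rank-one matrices $E_{11},\ldots,E_{rr}$ as linear combinations of the projected slices. The paper states this as the discussion immediately preceding Lemma~\ref{lm:geometricupperbound} (the existence of an $r$-dimensional diagonal subspace $W$ inside $\langle\tilde{T}_1,\ldots,\tilde{T}_{n_1}\rangle$), and you have spelled out that discussion in full detail.
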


\begin{re}
    The converse of the above is false. For example, the tensor $e_1 \otimes e_1 \otimes e_1 + e_1 \otimes e_2 \otimes e_2  \in \RR ^2 \otimes \RR^2 \otimes \RR ^2$ has slice rank one, and hence also subrank one. However, 
    \[\# (\PP \langle  e_1 \otimes e_1, e_2 \otimes e_2 \rangle \cap
    \Sigma _{2,2} ) = 2. \qedhere\]
\end{re}

\subsection{Typical subranks of \boldmath{$3\times 3\times 5$} tensors}

In this section, we use ideas from \cite{BER-typicalranks}, where the authors use a geometric approach to show that the typical ranks of $3\times 3\times 5$ tensors are 5 and 6. Combining this with the construction in Section~$\ref{sec:geometricmethod}$ gives us the typical ranks of $3\times 3\times 5$ tensors.

Given a sufficiently general tensor $T \in \RR ^3 \otimes \RR ^3
\otimes \RR ^5$, denote by $T_1, \ldots , T_5 \in \RR ^3 \otimes
\RR^3$ the slices of this tensor along the third axis. Let $V:=\PP
\langle T_1, \ldots , T_5 \rangle  \subseteq \PP(\RR^3
\otimes \RR^3)$ be the projectivization of the space spanned by these
5 matrices. For a typical tensor, the slices are linearly independent and so $V$ has projective dimension 4. If $Q _\RR (T ) = 3$, there exists a linear subspace $W \subseteq V$ which has the form 
\[
\left\{ \begin{pmatrix}
    x & 0 & 0\\
    0 & y & 0 \\
    0 & 0 & z
\end{pmatrix} \mid x,y,z \in \RR \right\}
\]
up to conjugation. 

We thank Tim Seynnaeve for the construction of $T$ in the second part of the following proof.

\begin{thm}
\label{thm:subrank335}
    The typical subranks of real $3 \times 3 \times 5$ tensors are $2 $ and 3.
\end{thm}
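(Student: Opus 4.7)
The plan is the following. By \cite{PSS-exactvaluesgenericsubrank}, the complex generic subrank of tensors in $\CC^3 \otimes \CC^3 \otimes \CC^5$ equals $\min\{\lfloor\sqrt{3+3+5-2}\rfloor,3\}=3$, so the lemma in Section~\ref{sec:consecutive} relating the generic complex subrank to the largest typical real subrank shows that $3$ is the largest typical real subrank and, in particular, is itself typical. Combining Lemma~\ref{lem:scaling} (applied with $(m_1,m_2,m_3)=(2,2,3)$) with Lemma~\ref{lm:223}, the smallest typical real subrank is at least $2$. By Theorem~\ref{thm:consecutive} it therefore suffices to exhibit a nonempty Euclidean-open set of tensors on which the subrank is strictly less than $3$; this together with the lower bound forces $2$ to be typical, and no other integer lies between $2$ and $3$.

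To produce such an open set, I would use the geometric reformulation stated just before Lemma~\ref{lm:geometricupperbound}. Since $n_2=n_3=r=3$, the surjective maps $\pi_2,\pi_3\colon\RR^3\to\RR^3$ appearing in the necessary and sufficient condition for $Q(T)\ge 3$ must be invertible, and absorbing them into the slices does not change the rank-one locus of the slice span. Hence $Q(T)\ge 3$ is equivalent to the slice span $V\subseteq M_3(\RR)$ containing three rank-one matrices $u_1v_1^\top,\,u_2v_2^\top,\,u_3v_3^\top$ with $\{u_1,u_2,u_3\}$ and $\{v_1,v_2,v_3\}$ both real bases of $\RR^3$; in particular, $V$ must contain at least one real rank-one matrix. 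Over $\CC$, a generic $\PP^4\subseteq\PP^8$ meets the Segre $\Sigma_{3,3}$ transversally in $\deg\Sigma_{3,3}=\binom{4}{2}=6$ points, and on this transversal locus the six intersection points depend continuously on $T$; since non-real points come in complex conjugate pairs, the number of real points among the six is locally constant on the set of real tensors (a non-real point can only become real by colliding with its conjugate on the real discriminant). Consequently, a single tensor $T_0$ whose slice span meets $\Sigma_{3,3}(\CC)$ transversally in six non-real points yields a Euclidean neighborhood on which $V\cap\Sigma_{3,3}(\RR)=\emptyset$ and hence on which $Q(T)<3$.

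The main obstacle is therefore the explicit construction of $T_0$. Following Seynnaeve, I would write down an explicit five-dimensional subspace $V_0\subseteq M_3(\RR)$---for instance with generators chosen so that a built-in $2\times 2$-block or rotation structure forces the rank-one members of $V_0$ to have non-real entries---and verify by a direct Gr\"obner-basis or resultant computation that the ideal generated by the $2\times 2$-minors of a general element $xT_1+yT_2+zT_3+wT_4+vT_5$ of $V_0$ cuts out a reduced zero-dimensional scheme of length $6$ in $\PP^4$, all of whose points are non-real. Once such $T_0$ is in hand, the previous paragraph provides the required open set on which $Q(T)\le 2$, and combined with the upper bound and the lower bound from the first paragraph, the typical subranks of real $3\times 3\times 5$ tensors are exactly $2$ and $3$.
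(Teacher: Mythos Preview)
Your overall architecture is correct and matches the paper: (i) $3$ is the generic, hence largest typical, subrank; (ii) Lemma~\ref{lem:scaling} together with Lemma~\ref{lm:223} gives the lower bound $2$; (iii) the remaining task is to find one tensor whose slice span in $M_3(\RR)$ contains no real rank-one matrix, and to observe that this persists on a Euclidean neighborhood. Your reduction via Lemma~\ref{lm:geometricupperbound} (and the remark that for $n_2=n_3=r=3$ the maps $\pi_2,\pi_3$ are invertible, so one may look directly at the slice span) is exactly what the paper uses.

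The gap is that you stop short of producing $T_0$. You propose to ``write down an explicit five-dimensional subspace $V_0\subseteq M_3(\RR)$'' and then verify by a Gr\"obner-basis or resultant computation that the six complex rank-one points are non-real. The paper (crediting Seynnaeve) gives a construction for which no computation is needed: take $V_0$ to be the space of \emph{traceless symmetric} $3\times 3$ real matrices. Any nonzero $A\in V_0$ is diagonalizable over $\RR$ with real eigenvalues summing to $\operatorname{tr}A=0$; hence $A$ cannot have rank $1$, so $\PP V_0\cap\Sigma_{3,3}(\RR)=\varnothing$. The five slices realizing $V_0$ are listed explicitly in the paper. This bypasses your proposed algebraic verification entirely.

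A secondary comment: your continuity argument via transversality of the six complex points and the discriminant locus is correct but heavier than required. Once you know the real intersection is \emph{empty} for $T_0$, emptiness is an open condition on $T$ (the locus of $(T,[M])$ with $M$ in the slice span and $\operatorname{rk}M\le 1$ is closed and proper over the $T$-factor), so the conclusion is immediate without tracking conjugate pairs. The paper uses precisely this simpler observation.
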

\begin{proof}
    The generic rank is $\min \{3,3,5,\lfloor \sqrt{3 + 3 + 5-2 }\rfloor\} = 3$, so $3$ is a typical subrank. Also, note that 2 is the only typical subrank of $3 \times 3 \times 3$ tensors, so 2 is a lower bound on the possible typical subranks. It is left to show that 2 is a typical subrank of $3 \times 3 \times 5$ tensors.

   Using Lemma~\ref{lm:geometricupperbound}, it suffices to show that
   there is a an open subset $U \subset \RR^3 \otimes \RR^3 \otimes
   \RR^5$ of full dimension  such that the intersection 
    \[
    \PP \langle T_1, \ldots , T_5 \rangle \cap  \Sigma _{3,3}
    \]
    has at most two (real) points for all $T \in U$, where
    $T_1,\dots,T_5$ are the slices of $T$ along the third axis.
    Indeed, for $T \in U$ we then have $Q(T)<3$.

    To show this, consider the tensor $T \in \RR^3 \otimes \RR^3 \otimes \RR^5$ with slices given by 
    \[
    T_1 = \begin{pmatrix}
    1 & 0 & 0\\
    0 & 0 & 0 \\
    0 & 0 & -1 \end{pmatrix},\;\; T_2 = \begin{pmatrix}
    0 & 0 & 0\\
    0 & 1 & 0 \\
    0 & 0 & -1 \end{pmatrix},\;\; T_3 = \begin{pmatrix}
    0 & 1 & 0\\
    1 & 0 & 0 \\
    0 & 0 & 0 \end{pmatrix},\;\; T_4 = \begin{pmatrix}
    0 & 0 & 1\\
    0 & 0 & 0 \\
    1 & 0 & 0 \end{pmatrix},\;\; T_5 = \begin{pmatrix}
    0 & 0 & 0\\
    0 & 0 & 1 \\
    0 & 1 & 0 \end{pmatrix}.
    \]
    The linear space $L:=\langle T_1, \ldots , T_5 \rangle$
    equals 
    \[
    \left\{ \begin{pmatrix}
    x & a & b\\
    a & y & c \\
    b & c & -x-y \end{pmatrix} \mid a,b,c,x,y \in \RR \right\}. 
    \]
    Any matrix $A$ in $L$ is symmetric and real, hence has real
    eigenvalues, which sum up to the trace, namely $0$. This implies
    that if $A$ is nonzero, then it has rank at least $2$. 
    So $\# \PP(L) \cap \Sigma
    _{3,3} = 0$. This then also holds for tensors in a neighborhood
    of $T$; hence, $2$ is a typical subrank. 
\end{proof}

\begin{re}
For random tensors with i.i.d.~entries sampled from a continuous
distribution, typical subranks can equivalently be defined as
$\left\{r \in \mathbb{N} \mid \text{Prob}\{ Q(T) = r\} > 0\right\}$.
In the context of Theorem~\ref{thm:subrank335}, it was shown by
\cite{BER-typicalranks} that, with probability one, $\rank (T) = 5$ if
and only if $\#(\PP \langle T_1,\dots,T_5 \rangle \cap \Sigma_{3,3}) = 6$. There exist four cases with nonzero probability: 
    \[
    \# (\PP\langle T_1,\dots,T_5 \rangle \cap \Sigma_{3,3})  \in
    \{0,2,4,6\}.
    \]
    If the number of intersection points is 6, the rank is 5, as explained by
    \cite{BER-typicalranks}. Thus, there exists a rank decomposition $T = \sum_{i=1}^5 v_{1i} \otimes v_{2i} \otimes v_{3i}$, where $v_{1i}, v_{2i} \in \R^3$ and $v_{3i} \in \R^5$. With probability one, $\{v_{3i}\}_{i=1}^5$ are linearly independent. We define the linear map $\pi_3:\R^5 \to \R^3$ that projects coordinatewise onto the first three coordinates. Then $(\text{id} \otimes \text{id} \otimes \pi_3)T \in \R^3 \otimes \R^3 \otimes \R^3$ is concise and of rank 3. By Remark \ref{rm:maxsubrank=rank}, $Q((\text{id} \otimes \text{id} \otimes \pi_3)T) = 3$, and hence $Q(T) \geq 3$. Because $T$ was taken to be typical, the subrank must be 3.    
    
    If the number of intersection points is $\leq 2$, the subrank is at most 2.  In particular, if the entries of $T$ are taken to be i.i.d.~Gaussians, the probability that there are at most 2 points in this intersection was shown to be equal to the probability that a random determinantal cubic surface contains 3 or 7 real lines, which is indeed strictly positive. In the remaining case of four intersection points, the argument of Section~\ref{sec:geometricmethod} does not show whether the subrank is 2 or 3.
\end{re}

\subsection{Typical subranks of \boldmath{$4 \times 4 \times 4$} tensors}
\label{sec:typSubrank4x4x4}
In this section, we show that 2 and 3 are both typical subranks of $4
\times 4 \times 4$ tensors. To do so, we show that the tensor defined
by the bilinear product of the quaternions has subrank 2. Recall that
the quaternions $\HH$ are a four dimensional division algebra over 
$\RR$, with basis $\{1, i, j, k\} $ and multiplication 
\begin{align*}
    &1x = x1 = x\textrm{ for all }x \in \HH,\\
    &i^2 = j^2 = k^2 = 1,\\
    &ij = k,\; jk = i,\; ki = j, \\
    &ji = -k,\; kj = -i ,\; ik = -j.
\end{align*}
The bilinear product $\HH \times \HH \to \HH$ defines a tensor $T \in (\RR ^4 )
^{\otimes 3}$ whose slices along the first axis are given by
\[
T_1 = \begin{pmatrix}
    1 & 0 & 0 & 0 \\
    0 & 1 & 0 & 0 \\
    0 & 0 & 1 & 0 \\
    0 & 0 & 0 & 1
\end{pmatrix}, \;\;\;\;
T_2 = \begin{pmatrix}
    0 & -1 & 0 & 0 \\
    1 & 0 & 0 & 0 \\
    0 & 0 & 0 & -1 \\
    0 & 0 & 1 & 0
\end{pmatrix}, \;\;\;\;
T_3 = \begin{pmatrix}
    0 & 0 & -1 & 0 \\
    0 & 0 & 0 & 1 \\
    1 & 0 & 0 & 0 \\
    0 & -1 & 0 & 0
\end{pmatrix}, \;\;\;\;
T_4 = \begin{pmatrix}
    0 & 0 & 0 & -1 \\
    0 & 0 & -1 & 0 \\
    0 & 1 & 0 & 0 \\
    1 & 0 & 0 & 0
\end{pmatrix}.
\]

Let $a \in \HH$; then left multiplication defines a linear map 
\[
L_a : \HH \to \HH , \; b \mapsto a \cdot b,
\]
where $a \cdot b$ denotes the image of the bilinear product. 
For $a= c_1 1 + c_2 i + c_3 j + c_4 k \in \HH$, the matrix of $L_a$ is
$c_1 T_1+ c_2 T_2 + c_3 T_3 + c_4 T_4$. Since the quaternions are a
division algebra, this matrix has full rank $4$ for all $a \in \HH \setminus \{ 0 \}$. 
\begin{lem}
\label{lm:quaternions}
    The real subrank of the tensor $T \in (\RR^4)^{\otimes 3}$ defined by the quaternions is 2 and there is a neighborhood of $T$ where the real subrank is 2.
\end{lem}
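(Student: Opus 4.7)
The plan is to sandwich $Q(T)$ between $2$ and $3$ and then show both bounds persist on a Euclidean neighborhood of $T$.

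For $Q(T) \geq 2$, I would apply Lemma~\ref{lem:spanningsetofslices}. Let $\pi_2 : \RR^4 \to \RR^2$ be the coordinate projection onto the first and third coordinates, and $\pi_3 : \RR^4 \to \RR^2$ the projection onto the first and second. A direct computation with the given matrices $T_1, \ldots, T_4$ shows that the four projected slices $\pi_2 T_\alpha \pi_3^T$ are, up to a sign on one of them, the matrix units $E_{11}, E_{12}, E_{21}, E_{22}$, and therefore span $\RR^2 \otimes \RR^2$. The lemma then gives $Q((\id \otimes \pi_2 \otimes \pi_3) T) = 2$, so $Q(T) \geq 2$.

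For the upper bound $Q(T) < 3$, I would combine Lemma~\ref{lm:geometricupperbound} with the division-algebra structure. Let $L = \langle T_1, \ldots, T_4 \rangle$; since $\HH$ is a division algebra and $L$ is the space of left multiplications, every nonzero element of $L$ is invertible, and one computes $\det(c_1 T_1 + c_2 T_2 + c_3 T_3 + c_4 T_4) = (c_1^2 + c_2^2 + c_3^2 + c_4^2)^2$, a positive definite polynomial in $(c_1, c_2, c_3, c_4)$. If $Q(T) \geq 3$, then by Lemma~\ref{lm:geometricupperbound} there would exist surjective maps $\pi_2, \pi_3 : \RR^4 \to \RR^3$ such that $\PP\langle \pi_2 T_1 \pi_3^T, \ldots, \pi_2 T_4 \pi_3^T \rangle$ meets $\Sigma_{3,3}$, i.e.~contains a rank-one matrix. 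But for any invertible $A \in L$, the image $A(\operatorname{row}(\pi_3)) \subseteq \RR^4$ has dimension $3$ while $\ker \pi_2 \subseteq \RR^4$ has dimension $1$, so their intersection has dimension at most $1$; this forces $\pi_2 A \pi_3^T$ to have rank at least $2$. Thus no rank-one element lies in the projected span, a contradiction.

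The neighborhood claim then follows because both bounds are open conditions on $T$. The lower bound persists since the nonvanishing of the $4 \times 4$ determinant recording that the four projected slices span $\RR^{2 \times 2}$ is Zariski-open. The upper bound persists because the polynomial $(c_1, \ldots, c_4) \mapsto \det\bigl(\sum_\alpha c_\alpha T'_\alpha\bigr)$ for the slices $T'_\alpha$ of a perturbation $T'$ varies continuously in its coefficients; positive definiteness is equivalent to strict positivity on the compact unit sphere $S^3 \subseteq \RR^4$, so small perturbations preserve it, every nonzero element of the perturbed span $L'$ is still invertible, and the same dimension count rules out $Q(T') \geq 3$.

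The most delicate point is transferring the upper-bound obstruction from $T$ to perturbations: pointwise positivity of the determinant on $\RR^4 \setminus \{0\}$ is not automatically stable, but the compactness of $S^3$ converts it into a uniform bound that is robust under small perturbations of coefficients. Everything else is either a direct matrix computation or an application of a lemma already in the paper.
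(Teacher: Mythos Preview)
Your argument is correct and follows the same overall strategy as the paper: the division-algebra structure forces every nonzero element of $\langle T_1,\dots,T_4\rangle$ to be invertible, so any $\RR^4\to\RR^3$ projections leave rank at least~$2$, giving the upper bound via Lemma~\ref{lm:geometricupperbound}; the lower bound and both neighborhood claims are then routine. The executions differ only in minor ways: the paper exhibits explicit $2\times4$ matrices $A,B,C$ with $(A\otimes B\otimes C)T=I_2$ and invokes the Cayley hyperdeterminant for the lower-bound stability, whereas you use Lemma~\ref{lem:spanningsetofslices}; and for the upper-bound stability the paper runs a compactness argument over $S^3\times S^3$ (parametrizing the pair of projections), while you more directly use compactness of a single $S^3$ to keep the determinant polynomial $\det\bigl(\sum_\alpha c_\alpha T'_\alpha\bigr)$ positive, which is a slightly cleaner way to conclude that the perturbed span still consists of invertible matrices.
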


\begin{proof}
    We start by showing that $Q(T ) \leq 2$. Let $T_1,\dots,T_4$ be the
    slices of $T$ along the first axis. As described above, all nonzero
    matrices in the linear space $\langle T_1 , \ldots , T_4\rangle
    \subseteq \RR^4 \otimes \RR ^4 $ are invertible.  Applying two linear
    maps $\pi_2,\pi_3:\RR^4 \to \RR^3$ to a rank-four matrix can reduce
    the rank at most by $2$, so that it still has rank $>1$. Hence by
    Lemma~\ref{lm:geometricupperbound}, $Q(T) \leq 2$. 

    Next, we show that there is a neighborhood in the Euclidean topology of $T$ where all tensors have real subrank at most 2. For all $v, w \in \RR ^4$ with $\Vert v \Vert = \Vert w \Vert =1$, there exist $\varepsilon, \delta >0 $ with the following property. For all $S \in B_\delta (T) $ and all $\tilde{v} \in B_\varepsilon (v) \subseteq S^3, \tilde{w} \in B_\varepsilon (w) \subseteq S^3$, the intersection
    \[
    \PP \langle \tilde{S}_1, \ldots , \tilde{S}_4 \rangle \cap \Sigma _{\tilde{u}^\perp , \tilde{w} ^\perp }
    \]
    is empty, where $\tilde{S}_1, \ldots , \tilde{S}_4 $ are the four
    slices of $\tilde{S} := (\id \otimes \pi _{\tilde{u}^\perp}
    \otimes \pi _{\tilde{w}^\perp} )(S) $ along the first axis, $\pi
    _{\tilde{u}^\perp} : \RR^4 \to \tilde{u}^\perp$ is the orthogonal
    projection 
    and $\pi _{\tilde{w}^\perp} $ is defined analogously. The open
    balls $B_\varepsilon (v) \times B_\varepsilon (w)$ form an open
    cover of $S^3 \times S^3$, so using compactness, there exists a
    finite subcover. Let $\delta_0$ be the minimal $\delta $ of these
    coverings. Then for all $S \in B_{\delta_0} (T) $, we have  
    \[
   \#  \PP \langle \tilde{S}_1, \ldots , \tilde{S}_4 \rangle \cap \Sigma _{u^\perp , w ^\perp } =0 
    \]
    for all $u,w \in S^3 $, which shows that $Q (S) \leq 2$.

    For completeness, to show that the subrank of $T$ is 2, one can check that $(A \otimes B \otimes C)T = I_2 $ for
    \[
    A = \begin{pmatrix}
        1 & 0 & 0 & 0 \\
        0 & 1 & 0 & 0 \\
    \end{pmatrix}, \;\;\; 
    B = \begin{pmatrix}
        1 & 0 & 0 & 0 \\
        0 & 0 & 1 & 0 \\
    \end{pmatrix}, \;\;\;
    C = \begin{pmatrix}
        1 & 0 & 0 & 0 \\
        0 & 0 & 0 & 1 \\
    \end{pmatrix} .
    \]
    
    Lastly, the fact that $(A \otimes B \otimes C)T = I_2 $ implies that the Cayley hyperdeterminant $\Delta (A \otimes B \otimes C)T_\varepsilon ) >0$ for $T_\varepsilon $ in a small enough neighborhood of $T$. This shows that in a neighborhood of $T$, all tensors have subrank 2.    
\end{proof}

\begin{thm}
    For $ 4 \times 4 \times 4$ tensors, the typical subranks are 2 and 3.
\end{thm}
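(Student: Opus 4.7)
The plan is to bracket the set of typical subranks from below and above, and then invoke the consecutiveness theorem. First, I would establish the upper bound: by the formula of Pielasa-{\v{S}}afránek-Shatsila \cite{PSS-exactvaluesgenericsubrank}, the generic complex subrank in $\CC^4 \otimes \CC^4 \otimes \CC^4$ is $\min\{4,\lfloor \sqrt{10}\rfloor\} = 3$. By the lemma earlier in this section (the generic complex subrank is the largest typical real subrank), no typical subrank can exceed $3$. Since the generic real subrank is already realized on a Euclidean-open set via the complexification/implicit-function argument, $3$ is a typical subrank of $\RR^4 \otimes \RR^4 \otimes \RR^4$.

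Next, I would establish the lower bound: by the corollary in Section~\ref{sec:2xnxm}, the unique typical subrank of $3\times 3\times 3$ tensors is $2$. Applying Lemma~\ref{lem:scaling} with $(m_1,m_2,m_3)=(3,3,3)$ and $(n_1,n_2,n_3)=(4,4,4)$, the minimal typical subrank of $4\times 4\times 4$ tensors is at least $2$. In particular, no integer $\leq 1$ is a typical subrank.

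To exhibit $2$ as a typical subrank, I would apply Lemma~\ref{lm:quaternions}: the quaternion multiplication tensor has real subrank $2$, and moreover there is a Euclidean-open neighborhood of it in $(\RR^4)^{\otimes 3}$ on which every tensor has subrank exactly $2$. Thus $2$ is a typical subrank.

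Finally, combining these three facts, both $2$ and $3$ are typical subranks, and every typical subrank lies in $\{2,3\}$. By Theorem~\ref{thm:consecutive}, the typical subranks form a consecutive range, so the typical subranks are exactly $2$ and $3$. There is no real obstacle here---all the substantive work has been done in the preceding lemmas---so the proof amounts to assembling the quoted results in the right order; the one conceptual point worth emphasizing is why $1$ is ruled out, which is precisely the monotonicity content of Lemma~\ref{lem:scaling} applied to the $3\times 3\times 3$ case.
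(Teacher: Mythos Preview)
Your proposal is correct and follows essentially the same route as the paper: the generic subrank gives the upper bound $3$, Lemma~\ref{lem:scaling} applied to a smaller format gives the lower bound $2$ (the paper happens to cite $2\times 2\times 3$ rather than $3\times 3\times 3$, but either works), and Lemma~\ref{lm:quaternions} exhibits $2$ as typical. Your appeal to Theorem~\ref{thm:consecutive} at the end is harmless but redundant, since you have already pinned down the set of typical subranks to $\{2,3\}$ directly.
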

\begin{proof}
    We know that $3$ is the largest typical subrank, as it is the generic subrank for tenors in $\CC ^4 \otimes  \CC ^4 \otimes \CC ^4$. As 2 is the typical subrank of $2 \times 2 \times 3$ tensors, we have that the typical subranks are at least 2. So it is left to show that $2$ is a typical subrank. In Lemma~\ref{lm:quaternions}, we show that the tensor of the quaternions has a neighborhood with subrank 2. Hence, 2 is a typical subrank.
\end{proof}

\begin{cor}
    For real tensors of the format $3 \times 4 \times 4$, the typical subranks are 2 and 3.
\end{cor}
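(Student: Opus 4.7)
The plan is to sandwich the minimal and maximal typical subranks of the $3\times 4\times 4$ format using the scaling lemma (Lemma~\ref{lem:scaling}), the just-proved fact that $\{2,3\}$ are the typical subranks of $4\times 4\times 4$ tensors, and the general facts about typical subranks established earlier in the paper.

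First, I would determine the maximal typical subrank. By the lemma stating that the generic complex subrank equals the largest typical real subrank, together with the Pielasa-{\v S}afránek-Shatsila formula $\min\{n_1,n_2,n_3,\lfloor\sqrt{n_1+n_2+n_3-2}\rfloor\}$, the largest typical subrank for $3\times 4\times 4$ tensors equals $\min\{3,4,4,\lfloor\sqrt{9}\rfloor\}=3$. In particular, $3$ is a typical subrank.

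Next, I would pin down the minimal typical subrank as $2$ by squeezing from both sides using Lemma~\ref{lem:scaling}. Applying the lemma with $(m_1,m_2,m_3)=(3,3,3)$ and $(n_1,n_2,n_3)=(3,4,4)$, the minimal typical subrank of $3\times 4\times 4$ tensors is at least the minimal typical subrank of $3\times 3\times 3$ tensors, which was shown to be $2$. Conversely, applying Lemma~\ref{lem:scaling} with $(m_1,m_2,m_3)=(3,4,4)$ and $(n_1,n_2,n_3)=(4,4,4)$, the minimal typical subrank of $3\times 4\times 4$ tensors is at most the minimal typical subrank of $4\times 4\times 4$ tensors, which equals $2$ by the preceding theorem (via the quaternion tensor in Lemma~\ref{lm:quaternions}). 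Hence the minimal typical subrank is exactly $2$.

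Finally, since both $2$ and $3$ are typical subranks and they are consecutive integers, Theorem~\ref{thm:consecutive} (or simply the fact that no integer lies strictly between them) shows that the full list of typical subranks for $3\times 4\times 4$ real tensors is precisely $\{2,3\}$. The proof is essentially an accounting exercise once the $4\times 4\times 4$ case is in hand, so no real obstacle is expected; the only care needed is to apply the scaling lemma in both directions and to invoke the correct lower bound via $3\times 3\times 3$.
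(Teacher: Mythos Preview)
Your proposal is correct and follows essentially the same route as the paper. The paper compresses the argument into a contrapositive---``if $2$ were not typical for $3\times 4\times 4$, then by Lemma~\ref{lem:scaling} it would not be typical for $4\times 4\times 4$''---but this is exactly your upper bound on the minimal typical subrank; your version simply makes both applications of the scaling lemma (down to $3\times 3\times 3$ for the lower bound, up to $4\times 4\times 4$ for the upper bound) explicit.
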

\begin{proof}
    We know that $3 = \lfloor \sqrt{3 + 4 + 4 -2} \rfloor$ is a typical subrank. If 2 were not typical, this would imply that 2 is also not a typical subrank for tensors in $\RR ^4 \otimes \RR ^ 4\otimes \RR^4 $.
\end{proof}

\begin{re}
    The same argument shows that the tensor defined by the complex
    numbers, seen as a two dimensional real algebra, has subrank 1.
    For the octonions, one can constuct projections onto
    five-dimensional subspaces and show that the intersection with the
    Segre variety is still empty. This shows that the subrank of the
    octonions is at most 4, which equals the generic subrank of $8
    \times 8 \times 8$ tensors. We conjecture something stronger,
    however.
\end{re}

\begin{conjecture} \label{conj:Octonions}
The real subrank of the $8 \times 8 \times 8$ multiplication tensor of
the octonions equals $3$.
\end{conjecture}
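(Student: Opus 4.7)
My plan is to prove both inequalities $Q(T) \geq 3$ and $Q(T) \leq 3$, with the latter being the main obstacle. For the lower bound, I would exhibit linear maps realizing an embedded $I_3$. Concretely, I search for three pairs of octonions $(a_i, b_i)$ and a $5$-dimensional subspace $K$ of the octonions (the kernel of the third projection) such that the diagonal products $a_i b_i$ are linearly independent modulo $K$, while the cross products $a_i b_j$ with $i \neq j$ all lie in $K$. Taking $a_1 = b_1 = 1$ forces $a_2, b_2, a_3, b_3 \in K$, and the remaining task is to find four vectors in $K$ whose two diagonal products escape $K$ in linearly independent directions while their two cross products remain inside $K$. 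A dimension count against the codimension of the constraints, together with the rich $G_2$-orbit structure on the octonions, makes the existence of such a configuration very plausible, and I expect a careful combination of Fano-basis vectors (possibly with small corrections) to yield an explicit example.

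For the upper bound, the remark preceding the conjecture already shows $Q(T) \leq 4$ as follows: every nonzero element of the span of the left-multiplication operators $L_a$ is invertible, so for any projections $\pi_2, \pi_3 : \RR^8 \to \RR^5$ the projected matrix $\pi_2 L_a \pi_3^\top$ has rank at least $5 - 3 = 2$, and hence the projected slice space contains no rank-one matrix. For projections onto $\RR^4$ this rank bound degenerates to $\geq 0$ and a new argument is required. Using the alternative-algebra identity $L_a^{-1} = L_{a^{-1}}$, the condition $Q(T) \geq 4$ from Lemma~\ref{lm:geometricupperbound} can be recast: there would have to exist nonzero octonions $a_1, \dots, a_4$, a $4$-dimensional subspace $K_2 \subset \RR^8$, lifts $\tilde u_1, \dots, \tilde u_4 \in \RR^8$ linearly independent modulo $K_2$, and a $4$-dimensional subspace $V \subset \RR^8$ with
\[
V \;\subset\; \bigcap_{i=1}^{4} L_{a_i^{-1}}\bigl( K_2 + \RR\,\tilde u_i \bigr).
\]
Each factor on the right is $5$-dimensional, so the expected dimension of the intersection is $8 - 4 \cdot 3 = -4$, and the task is to show that the specific constrained $5$-dimensional subspaces arising from octonionic left multiplication cannot conspire to produce a nonempty $4$-dimensional intersection.

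The main obstacle is precisely this last step. My plan is to normalize $a_1 = 1$ using the $G_2$-equivariance of the multiplication tensor, thereby substantially reducing the configuration space, and then to exploit the Moufang identities to derive an algebraic incompatibility from the four containment conditions interacting. If this direct attack fails, a promising alternative is to complexify: over $\CC$ the split-octonion multiplication tensor has subrank equal to the generic value $4$, so embeddings of $I_4$ correspond to a specific $\mathrm{GL}_8(\CC)^{\times 3}$-orbit in $\CC^8 \otimes \CC^8 \otimes \CC^8$; the conjecture would then follow from showing that this orbit has no real representative compatible with the Cayley norm form on the octonions, a statement one might hope to settle by a signature or index computation.
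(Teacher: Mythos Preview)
The statement you are attempting to prove is labeled a \emph{Conjecture} in the paper and is explicitly left open: the authors write that for the octonions ``we do not know whether $Q(f_1)=3$ or $4$''. There is thus no proof in the paper to compare your proposal against; the paper only establishes $Q(T)\leq 4$ via the rank argument you correctly summarize (projecting an invertible $8\times 8$ matrix to $5\times 5$ leaves rank $\geq 2$, so no rank-one slices survive and Lemma~\ref{lm:geometricupperbound} applies).

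Your proposal is accordingly a research plan rather than a proof, and the genuine gap is that neither half is carried out. For the lower bound $Q(T)\geq 3$ you describe the shape of a certificate (three pairs $(a_i,b_i)$ and a $5$-dimensional $K$) but do not produce one; this part is likely doable and would already be a modest contribution, but ``I expect a careful combination of Fano-basis vectors to yield an explicit example'' is not a proof. For the upper bound $Q(T)\leq 3$, you correctly identify that the rank-drop argument degenerates for projections to $\RR^4$, and your reformulation via $L_a^{-1}=L_{a^{-1}}$ (valid in an alternative algebra) is reasonable, but the decisive step---ruling out a $4$-dimensional intersection of four specific $5$-dimensional subspaces---is left entirely to hope (``Moufang identities'', ``$G_2$-equivariance'', or a signature obstruction after complexification). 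None of these is developed to the point of an argument, and the expected-dimension count $8-4\cdot 3=-4$ is only heuristic: special configurations can and do beat transversality. As it stands, you have restated the difficulty rather than resolved it, which is consistent with the paper's own assessment that the question is open.
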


\section{Subrank of direct sums of real division algebras}

In this section, we examine the subrank of multiplication tensors
of division algebras. Let $D$ be a real division algebra, i.e.,
a finite-dimensional vector space equipped with a bilinear map $D
\times D \to D,\ (a,b) \mapsto ab$ such that for each nonzero $a$
the left multiplication $L_a: b \mapsto ab$ is invertible. No further
conditions are imposed; in particular, the multiplication may be
non-associative or even non-alternative. Of course, by the celebrated
Hopf-Bott-Milnor-Kervaire theorem \cite{Hopf41,Bott58,Kervaire58}, it
follows that $\dim_\RR D \in \{1,2,4,8\}$. The bilinear map $f:D \times
D \to D$ has subrank 1 if $D = \CC$ (see Example~\ref{ex:subrankC})
and 2 if $D = \HH$ (see Section~\ref{sec:typSubrank4x4x4}).

We now turn to direct sums of the tensors corresponding to multiplication in those division algebras; i.e.~componentwise multiplication over $D^n$.

\begin{prop} \label{prop:Subrank}
Let $\RR^{n_1},\RR^{n_2},\RR^{n_3}$ be finite-dimensional $\RR$-vector spaces, and let 
\[ f:\RR^{n_1} \times \RR^{n_2} \to \RR^{n_3},\ (u,v) \mapsto u*v \] 
be a bilinear map. Then $Q(f)$ is the maximal
$r$ for which there exist $u_1,\ldots,u_r \in \RR^{n_1}$ and $v_1,\ldots,v_r
\in \RR^{n_2}$ such that $u_1*v_1,\ldots,u_r*v_r$ are linearly independent
modulo the space 
\[ \langle \{u_i*v_j \mid 1 \leq i,j \leq r, i \neq j\} \rangle_\RR. \] 
\end{prop}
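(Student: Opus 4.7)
The plan is to unpack the subrank definition for the bilinear map $f$. By the bilinear-map reformulation of the subrank recalled in the introduction (naturally extended from $V \times V \to V$ to $\RR^{n_1} \times \RR^{n_2} \to \RR^{n_3}$), we have $Q(f) \geq r$ if and only if there exist $\RR$-linear maps $\varphi_1 : \RR^r \to \RR^{n_1}$, $\varphi_2 : \RR^r \to \RR^{n_2}$, and $\varphi_3 : \RR^{n_3} \to \RR^r$ with $\varphi_3(f(\varphi_1(a), \varphi_2(b))) = a * b$ for all $a, b \in \RR^r$, where $a * b$ denotes the componentwise product in $\RR^r$.

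Next I would set $u_i := \varphi_1(e_i)$ and $v_j := \varphi_2(e_j)$ for $i, j \in [r]$. By bilinearity of $f$ and linearity of $\varphi_3$, the condition above is equivalent to its evaluation on standard basis pairs, namely $\varphi_3(u_i * v_j) = \delta_{ij} e_i$ for all $i,j$. Conversely, any such data extends via $\varphi_1(e_i) := u_i$ and $\varphi_2(e_j) := v_j$ to a triple certifying $Q(f) \geq r$. Thus $Q(f) \geq r$ holds if and only if there exist $u_1, \ldots, u_r \in \RR^{n_1}$, $v_1, \ldots, v_r \in \RR^{n_2}$, and a linear map $\varphi_3 : \RR^{n_3} \to \RR^r$ sending each diagonal product $u_i * v_i$ to $e_i$ and each off-diagonal product $u_i * v_j$ (with $i \neq j$) to zero.

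The final step is to characterize when such a $\varphi_3$ exists. Writing $W := \langle u_i * v_j \mid 1 \leq i, j \leq r,\ i \neq j \rangle_\RR$, I claim the requisite $\varphi_3$ exists exactly when $u_1 * v_1, \ldots, u_r * v_r$ are linearly independent modulo $W$. The forward direction is immediate: applying $\varphi_3$ to any relation of the form $\sum_i c_i\, u_i * v_i \in W$ gives $\sum_i c_i e_i = 0$, forcing all $c_i = 0$. For the converse, linear independence modulo $W$ makes the prescription $u_i * v_i \mapsto e_i$ and $u_i * v_j \mapsto 0$ (for $i \neq j$) well-defined as a linear map on the subspace $\langle u_i * v_j \mid 1 \leq i, j \leq r \rangle_\RR$ of $\RR^{n_3}$, which then extends arbitrarily to a linear map on all of $\RR^{n_3}$. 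This argument is essentially a formal rephrasing of the subrank definition, so I do not expect any substantive obstacle; the only point needing mild care is the well-definedness of $\varphi_3$ in the converse direction, which is handled by the quotient argument sketched above.
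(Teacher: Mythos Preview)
Your argument is correct and follows essentially the same route as the paper's proof: translate $Q(f)\geq r$ into the existence of $\varphi_1,\varphi_2,\varphi_3$, set $u_i=\varphi_1(e_i)$, $v_j=\varphi_2(e_j)$, and reduce to the existence of a linear $\varphi_3$ sending $u_i*v_j\mapsto\delta_{ij}e_i$. If anything, you are slightly more explicit than the paper in justifying the well-definedness of $\varphi_3$ in the converse direction.
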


\begin{proof}
Set $r:=Q(f)$. Then there exist linear maps $\varphi_1:\RR^r \to \RR^{n_1}$,
$\varphi_2:\RR^r \to \RR^{n_2}$, and $\varphi_3:\RR^{n_3} \to \RR^r$ such that the composition
\[ \varphi_3 \circ f \circ (\varphi_1 \times \varphi_2) = I_r:\RR^r \times \RR^r \to
\RR^r \] is the component-wise multiplication. Denote the standard basis
of $\RR^r$ by $e_1,\ldots,e_r$ and set $u_i:=\varphi_1(e_i)$
and $v_j:=\varphi_2(e_j)$ for $i,j=1,\ldots,r$. Then $\varphi_3(u_i*v_i)=e_i$
and $\varphi_3(u_i*v_j)=0$ for $i \neq j$. Hence the $u_i \in \RR^{n_1}$ and the
$v_j \in \RR^{n_2}$
have the required property.

Conversely, if the $u_i$ and the $v_i$ have the property in the
proposition, then define $\varphi_1(e_i):=u_i$, $\varphi_2(e_j):=v_j$, and note
that there exists a linear map $\varphi_3:\RR^{n_3} \to \RR^r$ that maps $u_i*v_i$ to
$e_i$ and all $u_i*v_j$ with $i \neq j$ to zero. Then $\varphi_3 \circ f
\circ (\varphi_1 \times \varphi_2) = I_r:\RR^r \times \RR^r \to \RR^r$ is the componentwise scalar multiplication, so $Q(f) \geq r$.
\end{proof}

\begin{lm} \label{lm:Characterisation}
For a bilinear map $f:\RR^{n_1} \times \RR^{n_2} \to \RR^{n_3},\ (u,v) \mapsto u*v$ and a $q
\in \ZZ_{\geq 0}$ the following two statements are equivalent:
\begin{enumerate}
\item $Q(f) \leq q$; and 
\item given any integer $r \geq q$ and any $u_1,\ldots,u_r \in \RR^{n_1}$ and
$v_1,\ldots,v_r \in \RR^{n_2}$, define the space of linear relations
\[ R:=\{\alpha \in \RR^{[r] \times [r]} \mid \sum_{i,j}
\alpha_{ij}u_i*v_j=0\} \]
and the projection to the diagonal
\[ \pi:\RR^{[r] \times[r]} \to \RR^r,\ \alpha \mapsto
(\alpha_{11},\ldots,\alpha_{rr}). \]
Then $\dim \pi(R) \geq r-q$.
\end{enumerate}
\end{lm}

\begin{proof}
Suppose that the second statement holds; let $r>q$, and let
$u_1,\ldots,u_r \in \RR^{n_1}$ and $v_1,\ldots,v_r \in \RR^{n_2}$. By the second
statement, the $u_i*v_i$ are not linearly independent modulo the $u_i*v_j$
with $i \neq j$. Hence by Proposition~\ref{prop:Subrank}, $Q(f) \leq q$.

Now suppose that $Q(f) \leq q$ and let $r \geq q$ and $u_1,\ldots,u_r \in
\RR^{n_1}$ and $v_1,\ldots,v_r \in \RR^{n_2}$. Let $s$ be the dimension of the image of $\langle u_1*v_1,
\ldots, u_r*v_r \rangle_\RR$ under projection modulo the space $W$ spanned by
the $u_i*v_j$ with $i \neq j$. By Proposition~\ref{prop:Subrank}, $s \leq
Q(f)$, and hence $s \leq q$. Without loss of generality, $u_1*v_1,\ldots,u_s*v_s$ are
linearly independent modulo $W$. Then $u_1*v_1,\ldots,u_s*v_s,u_i*v_i$
are not, for any $i=s+1,\ldots,r$.  This gives rise to $r-s \geq r-q$
elements in $R$ whose images under $\pi$ are linearly independent.
\end{proof}

We are now ready to prove the main result of this section.

\begin{thm}\label{thm:upperBoundQDivision}
    Let $D$ be a division algebra over $\RR$ of dimension $\geq 2$. Let $f_n$ be the componentwise multiplication map  
    \[
    f_n: D^n \times D^n \to D^n; (a,b)=((a_1, \ldots a_n),(b_1, \ldots , b_n)) \mapsto ( a_1  b_1, \ldots , a_n  b_n)=: a* b.
    \]
    Regarding $f_n$ as a $\RR$-bilinear map, we have $Q(f_n) \leq nd$,
    where $d = \frac{1}{2} \dim _\RR D$.
\end{thm}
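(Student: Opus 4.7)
The plan is to apply Lemma~\ref{lm:Characterisation}: to show $Q(f_n) \leq nd$, it suffices to prove that for every integer $r \geq nd$ and every choice of $u_1, \ldots, u_r, v_1, \ldots, v_r \in D^n$, one has $\dim\pi(R) \geq r - nd$. By standard linear-algebra duality, this is equivalent to the claim that the diagonal products $u_1*v_1, \ldots, u_r*v_r$ span at most an $nd$-dimensional subspace of $D^n/W$, where $W = \langle u_i*v_j : i \neq j\rangle_\RR \subseteq D^n$.

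The first step is to settle the base case $n = 1$: for any $u_1, \ldots, u_r, v_1, \ldots, v_r \in D$, the diagonal products $u_iv_i$ span a subspace of $D/W$ of dimension at most $d = \frac{1}{2}\dim_\RR D$. After a routine reduction (WLOG all $u_i,v_j$ are nonzero, and the $u_i$'s, respectively the $v_j$'s, are $\RR$-linearly independent in $D$), the argument exploits the division-algebra property: since left-multiplication $L_{u_i}$ and right-multiplication $R_{v_j}$ are $\RR$-linear automorphisms of $D$, the off-diagonal constraints $u_iv_j \in W$ for $i \neq j$ are very rigid, and combined with the multiplicativity $N(xy) = N(x)N(y)$ of the Hurwitz norm, each "new" diagonal direction consumes two ambient $\RR$-dimensions of $D$, yielding the half-dimensional bound~$d$.

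For general $n$, my plan is to identify $(D^n)^*$ with $D^n$ via the positive-definite inner product $\langle x, y\rangle = \sum_k \operatorname{Re}(\bar x_k y_k)$ induced by the Hurwitz norm, so that the bound becomes: the linear map $\xi \mapsto (\langle \xi,\, u_i * v_i\rangle)_i \colon W^\perp \to \RR^r$ has image of dimension at most $nd$. Writing $\xi = (\xi_1,\ldots,\xi_n)$ coordinate-wise, for each $k$ the real matrix $M^{(k)}_{ij} = \langle \xi_k, u_{i,k}v_{j,k}\rangle$ is the pairing of $\xi_k \in D$ with the rank-one-over-$D$ matrix $(u_{i,k}v_{j,k})_{ij}$. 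The requirement that $\sum_k M^{(k)}$ be diagonal then splits into coordinate-wise constraints to which the $n = 1$ lemma applies, giving a bound of $d$ per coordinate and $nd$ in total.

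The main obstacle I expect is precisely this coordinate-coupling step: the off-diagonal span $W \subseteq D^n$ can be strictly smaller than the direct sum $\bigoplus_{k=1}^n W_k$ of coordinatewise off-diagonals, because an off-diagonal relation $\sum \alpha_{ij}\, u_i*v_j = 0$ in $D^n$ may genuinely mix several coordinates. A naive coordinatewise application of the $n = 1$ lemma therefore does not suffice; one must use the invertibility of $L_{u_{i,k}}$ and $R_{v_{j,k}}$ within each coordinate to reduce such mixed relations to coordinatewise ones, and carefully track how these reductions interact across coordinates. This is where the bulk of the technical work lies, and it is what the division-algebra hypothesis is ultimately needed for.
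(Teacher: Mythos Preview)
Your proposal has two genuine gaps.

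First, the base case $n=1$ is not established. You invoke ``the multiplicativity $N(xy)=N(x)N(y)$ of the Hurwitz norm'' and later the conjugation $x\mapsto\bar x$, but the theorem is stated for an \emph{arbitrary} real division algebra of dimension $\geq 2$: no norm, no involution, not even alternativity is assumed (the paper says so explicitly). By Hurwitz's theorem the \emph{normed} division algebras are exactly $\RR,\CC,\HH,\OO$, but there are many other real division algebras (small perturbations of these multiplications remain division algebras, since invertibility of all $L_a$ is an open condition), and for those your norm-based sketch simply does not get off the ground. So even the case $n=1$ is open in your write-up.

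Second, for general $n$ you correctly identify the obstruction---the off-diagonal span $W\subseteq D^n$ need not split as $\bigoplus_k W_k$---but you do not overcome it. Saying ``one must use invertibility of $L_{u_{i,k}}$ and $R_{v_{j,k}}$ within each coordinate to reduce such mixed relations to coordinatewise ones'' is a restatement of the difficulty, not a resolution; there is no mechanism in the proposal that actually produces the extra $r-nd$ diagonal relations when the coordinates are coupled.

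By contrast, the paper's proof never separates out $n=1$ and never uses a norm. It inducts on $n$ starting from the trivial case $n=0$. In the inductive step one fixes the support of $u_r$ (say the last $k$ coordinates), uses invertibility of left multiplication by $u_r$ to split $r=r_1+r_2$, applies the induction hypothesis to the first $n-k$ coordinates twice---once to bound $r_2\leq(n-k)d$, and once via Lemma~\ref{lm:Characterisation} to produce a $kd$-dimensional space $R'$ of relations with independent diagonals---and then finishes with a pigeonhole: the image of $R'$ and the span $\langle u_r*v_1,\ldots,u_r*v_r\rangle_\RR$ are subspaces of $\{0\}^{n-k}\times D^k\cong\RR^{2kd}$ of dimensions $kd$ and $r_1>kd$, so they meet nontrivially, contradicting the assumed independence. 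The only property of $D$ used is that each $L_a$ with $a\neq 0$ is invertible.
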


\begin{proof}
We proceed by induction on $n$. For $n=0$ the statement is obvious;
so we assume that $n>0$ and that the statement is true for all
strictly smaller values of $n$. 

By Proposition~\ref{prop:Subrank}, we need to show that if $u_1,\ldots,u_r
\in D^n$ and $v_1,\ldots,v_r \in D^n$ have the property that the $u_i*v_i$
are $\RR$-linearly independent modulo the space $W$ spanned over $\RR$
by the $u_i*v_j$ with $i \neq j$, then $r \leq n d$.

It follows immediately that $u_1,\ldots,u_r$ are linearly independent
over $\RR$, and so are $v_1,\ldots,v_r$: indeed, if $u_i=\sum_{j \neq i}
\gamma_j u_j$ for certain $\gamma_j \in \RR$, then $u_i*v_i$ lies in $W$,
a contradiction to the assumption that the $u_i*v_i$ are linearly
independent modulo $W$. 

Suppose, for a contradiction, that $r>nd$. After permuting coordinates,
we may assume that $u_r \in \{0\}^{n-k} \times (D\setminus\{0\})^k $ with $k>0$.

We define the left multiplication map by $u_r$ as 
\[
L_{u_r} : D^n \to \{0\}^{n-k} \times D^k, \quad   v \mapsto  u_r * v.
\]
We restrict this map to $\langle v_1,\ldots,v_r \rangle_\RR$ and define
\[ r_1:=\dim_\RR L_{u_r}( \langle \{ v_j \mid j=1,\ldots,r \} \rangle_\RR ) = \dim_\RR \langle \{ u_r*v_j \mid j=1,\ldots,r \} \rangle_\RR. \]
Let $r_2 = r -r_1$ be the dimension of the kernel of said restriction
of $L_{u_r}$. Since $D$ is a division algebra, 
\[ r_2=\dim_\RR \langle v_1,\ldots,v_r \rangle_\RR \cap (
D^{n-k} \times \{0\}^k ). \]

Define 
\[ V:=\{\gamma \in \RR^r \mid \sum_{i=1}^r \gamma_i v_i \in 
 D^{n-k} \times \{0\}^k \}, \]
so that $\dim_\RR(V)=r_2$. Let $I \subseteq [r]$ be a subset of
cardinality $r_2$ such that the projection $\RR^r \to \RR^I$ restricts
to an isomorphism on $V$. Then for all $j \in I$ we have
\[ \widetilde{v}_j:=v_j+\sum_{l \not \in I} \gamma_{jl} v_l \in  D^{n-k} \times \{0\}^k \]
for suitable coefficients $\gamma_{jl} \in \RR$. We claim that the vectors
$u_i*\widetilde{v}_i$ with $i \in I$ are linearly independent modulo
the space $\widetilde{W}$ spanned by the $u_i*\widetilde{v}_j$ with $i,j
\in I$ and $i \neq j$. Indeed, suppose that 
\[ \sum_{i,j \in I} \alpha_{ij} u_i*\widetilde{v}_j = 0 \]
for certain scalars $\alpha_{ij} \in \RR$, where the $\alpha_{ii}$ are
not all zero. Substituting the $\widetilde{v}_j$, we obtain 
\[ 0=\sum_{i,j \in I} \alpha_{ij} (u_i*v_j + \sum_{l \not \in I}
\gamma_{jl} u_i*v_l) = \sum_{i,j \in I} \alpha_{ij} u_i*v_j 
+ \sum_{i \in I} \sum_{l \not \in I} (\sum_{j \in I} \alpha_{ij}
\gamma_{jl}) u_i*v_l, \]
which is a linear relation among the $u_i*v_j$ with $i\in I, j \in
[r]$ in which the coefficients of the $u_i*v_i$ are not all zero,
a contradiction. This proves the claim. Note further that the claim
remains intact if we replace, in each $u_i$ with $i \in I$, the last
$k$ entries by $0$s---after all, this does not affect the products
$u_i*\widetilde{v}_j$. Hence, by the induction hypothesis applied to $n-k$,
we find that $r_2 \leq (n-k)d$, and therefore $r_1=r-r_2\geq r-(n-k)d>kd$,
where the last inequality follows from the assumption that $r>nd$.

Next let $\psi:D^n \to D^{n-k}$ be the projection onto the 
first $n-k$ coordinates. Define 
\[ u'_i:=\psi(u_i) \text{ and } v'_j:=\psi(v_j)
\text{ for } i,j=1,\ldots r-1.\]
Let $R$ be the space of relations among their products:
\[ R:=\{\alpha \in \RR^{[r-1] \times [r-1]} \mid \sum_{i,j} \alpha_{ij}
u'_i*v'_j=0\},  \]
and let $\pi:\RR^{[r-1] \times [r-1]} \to \RR^{r-1}$ be the projection
to the diagonal. By Lemma~\ref{lm:Characterisation} and the induction
hypothesis applied to these vectors, the projection $\pi(R)$ has dimension
at least $(r-1)-(n-k)d\geq kd$ (here we use that $r-1 \geq nd$, which
follows from $r>nd$ and the fact that $d$ is an integer, i.e., that
$\dim(D) \geq 2$). Let $R' \subseteq R$ be a $kd$-dimensional subspace
on which $\pi$ is injective.

Now consider the $\RR$-linear map $\psi:R' \to  \{0\}^{n-k} \times D^k
\subseteq D^n$ defined by
\[ \psi(\alpha)=\sum_{i,j} \alpha_{ij} u_i*v_j. \]
Any nonzero $\alpha$ in the kernel of $\psi$ would give a linear relation
among the $u_i*v_j$ with $i,j=1,\ldots,r-1$ in which the coefficients
of the $u_i*v_i$ are not all zero, a contradiction. Hence $\psi$ is
injective. Therefore $\dim_\RR(\im(\psi))=kd$. But now $\im(\psi)$ and
$\langle u_r*v_1,\ldots,u_r*v_r \rangle_\RR$ are $\RR$-subspaces of $D^k
\times \{0\}^{n-k}$ of dimensions $kd$ and $r_1 >kd$, respectively. Since
$d=\frac{1}{2}\dim_\RR (D)$, we have $\dim_\RR ( \{0\}^{n-k} \times D^k )=2kd$, so these
spaces intersect nontrivially: say
\[ \psi(\alpha)=\beta_1 u_r*v_1 + \cdots + \beta_r u_r*v_r \neq 0 \]
for suitable $\alpha \in R'$ and $\beta_1,\ldots,\beta_r \in \RR$.
But then 
\[ \sum_{i,j \in [r-1]} \alpha_{ij} u_i*v_j 
- \sum_{j=1}^r \beta_j u_r*v_j = 0 \]
and not all $\alpha_{ii}$ are zero, a contradiction. We conclude that
$r \leq nd$, as desired. 
\end{proof}

\begin{cor}
\label{cor:subrankCandH}
    Let $D$ be the division algebra $\CC $ or $\HH$  over $\RR$. Let $f_n$ be the componentwise multiplication map  
    \[
    f_n: D^n \times D^n \to D^n; (a,b)=((a_1, \ldots a_n),(b_1, \ldots , b_n)) \mapsto ( a_1  b_1, \ldots , a_n  b_n)=: a* b.
    \]
    Regarding $f_n$ as a $\RR$-bilinear map, the subrank is $Q(f_n) = nq$, where $q$ is the subrank of $f_1$. In other words, the subrank is additive for direct sums of the division algebra with itself.
\end{cor}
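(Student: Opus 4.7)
The plan is to combine the upper bound already established in Theorem~\ref{thm:upperBoundQDivision} with a matching lower bound coming from the trivial super-additivity of subrank under direct sums, and then to verify that in the two cases $D=\CC$ and $D=\HH$ the quantities $d=\tfrac{1}{2}\dim_\RR D$ and $q=Q(f_1)$ coincide.

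First, I would record the upper bound $Q(f_n) \leq nd$ directly from Theorem~\ref{thm:upperBoundQDivision}. Second, I would establish the lower bound $Q(f_n) \geq n\, Q(f_1)$. This is the standard block-diagonal construction: if $\varphi_1,\varphi_2 : \RR^q \to D$ and $\varphi_3 : D \to \RR^q$ realise $Q(f_1)=q$, then taking $n$ copies and placing them in the $n$ coordinate blocks of $D^n$ produces linear maps $\Phi_1,\Phi_2 : \RR^{nq} \to D^n$ and $\Phi_3 : D^n \to \RR^{nq}$ with $\Phi_3 \circ f_n \circ (\Phi_1 \times \Phi_2) = I_{nq}$. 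Concretely, on the block identification $\RR^{nq}=(\RR^q)^n$ set $\Phi_i := \varphi_i^{\oplus n}$; then the cross-block contributions vanish because $f_n$ is componentwise, and each diagonal block contributes an $I_q$.

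Third, I would identify $d$ and $q$ in each case. For $D=\CC$, Example~\ref{ex:subrankC} together with Remark~\ref{rm:maxsubrank=rank} gives $Q(f_1)=1$, while $d=\tfrac{1}{2}\cdot 2 =1$. For $D=\HH$, Lemma~\ref{lm:quaternions} gives $Q(f_1)=2$, while $d=\tfrac{1}{2}\cdot 4 =2$. Hence $q=d$ in both cases, and the chain
\[ nq \;\leq\; Q(f_n) \;\leq\; nd \;=\; nq \]
forces equality.

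The only subtlety worth emphasising, in view of the remark following Theorem~\ref{thm:upperBoundQDivision} that subrank can be strictly super-additive, is that for these specific algebras the super-additive lower bound happens to meet the linear upper bound proved by induction; no additional argument beyond Theorem~\ref{thm:upperBoundQDivision} is needed. There is no real obstacle, since the hard work has already been done in that theorem and in the case $n=1$ for $\CC$ and $\HH$; the role of this corollary is simply to package the two bounds together.
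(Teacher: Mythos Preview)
Your proposal is correct and follows exactly the same approach as the paper's own proof: combine the upper bound $Q(f_n)\le nd$ from Theorem~\ref{thm:upperBoundQDivision} with the easy lower bound $Q(f_n)\ge nQ(f_1)$ from block-diagonal super-additivity, and then observe that $Q(f_1)=d$ for $D=\CC$ and $D=\HH$. Your write-up is merely more explicit than the paper's two-sentence proof, spelling out the block maps $\Phi_i=\varphi_i^{\oplus n}$ and the specific references for $Q(f_1)$ in each case.
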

\begin{proof}
     The direction $Q(f_n) \geq nq $ always holds for direct sums of
     tensors. The other direction follows from
     Theorem~\ref{thm:upperBoundQDivision} above using that for $D =
     \CC $ or $D = \HH$ we have $Q(f_1 ) = \dim_{\RR} (D) /2$.
\end{proof}
\begin{re}
    For the octonions, the above upper bound is also valid. However,
    we do not know whether $Q(f_1) = 3$ or 4; see
    Conjecture~\ref{conj:Octonions}. 
\end{re}
\bibliographystyle{alpha}\bibliography{math}

\begin{thebibliography}{DMZ24}

\bibitem[Bal14]{Ballico-upperboundrealtensorrank}
E.~Ballico.
\newblock An upper bound for the real tensor rank and the real symmetric tensor rank in terms of the complex ranks.
\newblock {\em Linear and Multilinear Algebra}, 62(11):1546--1552, 2014.

\bibitem[BBO18]{BBO18}
A.~{Bernardi}, G.~{Blekherman}, and G.~{Ottaviani}.
\newblock {On real typical ranks}.
\newblock {\em Boll Unione Mat Ital}, 11:293--307, September 2018.

\bibitem[Ber13]{Bergqvist2013}
G.~Bergqvist.
\newblock Exact probabilities for typical ranks of $2\times 2\times 2$ and $3\times 3\times 2$ tensors.
\newblock {\em Linear Algebra and its Applications}, 438:663--667, 2013.

\bibitem[BER25]{BER-typicalranks}
P.~Breiding, S.~Eggleston, and A.~Rosana.
\newblock Typical ranks of random order-three tensors.
\newblock {\em International Mathematics Research Notices}, 2025(4), 2025.

\bibitem[BM58]{Bott58}
R.~Bott and J.W. Milnor.
\newblock On the parallelizability of the spheres.
\newblock {\em Bull. Am. Math. Soc.}, 64:87--89, 1958.

\bibitem[DMZ24]{DMZ24}
H.~Derksen, V.~Makam, and J.~Zuiddam.
\newblock Subrank and optimal reduction of scalar multiplications to generic tensors.
\newblock {\em J. Lond. Math. Soc., II. Ser.}, 110(2):26, 2024.
\newblock Id/No e12963.

\bibitem[dSL08]{Silva-Lim08}
V.~de~Silva and L.-H. Lim.
\newblock Tensor rank and the ill-posedness of the best low-rank approximation problem.
\newblock {\em SIAM Journal on Matrix Analysis and Applications}, 30(3), 2008.

\bibitem[Hop41]{Hopf41}
H.~Hopf.
\newblock Ein topologischer {Beitrag} zur reellen {Algebra}.
\newblock {\em Comment. Math. Helv.}, 13:219--239, 1941.

\bibitem[Ker58]{Kervaire58}
M.A. Kervaire.
\newblock Non-parallelizability of the {{\(n\)}}-sphere for {{\(n > 7\)}}.
\newblock {\em Proc. Natl. Acad. Sci. USA}, 44:280--283, 1958.

\bibitem[Kru77]{Kruskal77}
J.B. Kruskal.
\newblock Three-way arrays: rank and uniqueness of trilinear decompositions, with application to arithmetic complexity and statistics.
\newblock {\em Linear Algebra and its Applications}, 18(2):95--138, 1977.

\bibitem[Kru89]{Kruskal89}
J.B. Kruskal.
\newblock Rank, decomposition, and uniqueness for 3-way and $n$-way arrays.
\newblock In R.~Coppi and S.~Bolasco, editors, {\em {M}ultiway {D}ata {A}nalysis}, pages 7--18. Elsevier Science Publishers, 1989.

\bibitem[P{\v{S}}S24]{PSS-exactvaluesgenericsubrank}
P.~Pielasa, M.~{\v{S}}afránek, and A.~Shatsila.
\newblock Exact values of generic subrank, 2024.
\newblock Preprint, \verb+arXiv:2408.07550+.

\bibitem[Str87]{Strassen1987}
V.~Strassen.
\newblock Relative bilinear complexity and matrix multiplication.
\newblock {\em Journal für die reine und angewandte Mathematik}, 375--376:406--443, 1987.

\bibitem[tBK99]{tenBerge-Kiers99}
J.M.F. ten Berge and H.A.L. Kiers.
\newblock Simplicity of core arrays in three-way principal component analysis and the typical rank of $p\times q\times 2$ arrays.
\newblock {\em Linear Algebra and its Applications}, 294:169--179, 06 1999.

\bibitem[TS16]{Tao16}
T.~Tao and W.~Sawin.
\newblock Notes on the ``slice rank'' of tensors.
\newblock \url{https://terrytao.wordpress.com/2016/08/24/notes-on-the-slice-rank-of-tensors/}, 2016.

\end{thebibliography}

\end{document}